\theoremstyle{plain}
\newtheorem{theorem}{Theorem}[section]
\newtheorem{proposition}[theorem]{Proposition}
\newtheorem{lemma}[theorem]{Lemma}
\newtheorem{corollary}[theorem]{Corollary}
\theoremstyle{definition}
\newtheorem{definition}[theorem]{Definition}
\newtheorem{notation}[theorem]{Notation}
\begin{document}
	\def\N{\mathbb{N}}
	\def\Z{\mathbb{Z}}
	\def\R{\mathbb{R}}
	\def\C{\mathbb{C}}
	\def\M{\mathcal{M}}
	
	\title[Simpler characterizations of total orderization invariant maps]{
		Simpler characterizations of total orderization invariant maps}
	\author{C. Schwanke}
	\address{Department of Mathematics and Applied Mathematics, University of Pretoria, Private Bag X20, Hatfield 0028, South Africa}
	\email{cmschwanke26@gmail.com}
	\date{\today}
	\subjclass[2020]{05B35}
	\keywords{distributive lattice, total orderization invariant map, symmetric lattice multi-homomorphism}
	
\begin{abstract}
Given a finite subset $A$ of a distributive lattice, its total orderization $to(A)$ is a natural transformation of $A$ into a totally ordered set. Recently, the author showed that multivariate maps on distributive lattices which remain invariant under total orderizations generalize various maps on vector lattices, including bounded orthosymmetric multilinear maps and finite sums of bounded orthogonally additive polynomials. Therefore, a study of total orderization invariant maps on distributive lattices provides new perspectives for maps widely researched in vector lattice theory. However, the unwieldy notation of total orderizations can make calculations extremely long and difficult. In this paper we resolve this complication by providing considerably simpler characterizations of total orderization maps. Utilizing these easier representations, we then prove that a lattice multi-homomorphism on a distributive lattice is total orderization invariant if and only if it is symmetric, and we show that the diagonal of a symmetric lattice multi-homomorphism is a lattice homomorphism, extending known results for orthosymmetric vector lattice homomorphisms.
\end{abstract}
	
	\maketitle
	\section{Introduction}\label{S:intro}

Several characterizations of bounded orthosymmetric multilinear maps and orthogonally additive polynomials have been studied in recent years, see e.g. \cite{BenAmor, BoyRySniga, BusSch-characterizing, Kus3, Kusa, Kusa-on the rep of orth adds, Sch, Sch-toi}. Combining \cite[respectively, Theorem~18, Lemma~2.6, Theorems~2.3\&2.4, main~result, Lemma~4, Theorem~2.3, Theorems~3.14\&3.17]{BenAmor, BoyRySniga, BusSch-characterizing, Kusa, Kusa-on the rep of orth adds, Sch, Sch-toi} all into one theorem, we obtain the following result.

\begin{theorem}\cite{BenAmor, BoyRySniga,BusSch-characterizing, Kusa, Kusa-on the rep of orth adds, Sch, Sch-toi}\label{T:synth}
Let $E$ be a uniformly complete Archimedean vector lattice, let $Y$ be a real separated bornological space, put $r,n\in\mathbb{N}\setminus\{1\}$, let $T\colon E^n\to Y$ be a bounded symmetric $n$-linear map, and let $P_T$ be the $n$-homogeneous polynomial generated by $T$. The following are equivalent.
\begin{itemize}
	\item[(i)] $T$ is orthosymmetric,
	\item[(ii)] $T$ is total orderization invariant,
	\item[(iii)] $T$ is positively total orderization invariant,
	\item[(iv)] $T(\underbrace{x, \dots, x}_{n-k\ \text{times}}, \underbrace{y, \dots, y}_{k\ \text{times}})=0$ for every $k\in\lbrace 1,\dots,n-1\rbrace$ and all $x,y\in E$ with $x\perp y$,
	\item[(v)] $P_T$ is orthogonally additive,
	\item[(vi)] $P_T$ is positively orthogonally additive,
	\item[(vii)] $P_T(x)=P_T(x^+)+(-1)^nP_T(x^-)$ for each $x\in E$,
	\item[(viii)] $P_T\bigl(\mathfrak{S}_{n}(x_1,\dots,x_r)\bigr)=\sum_{k=1}^{r}P_T(x_k)$ holds for all $x_{1},\dots,x_{r}\in E^{+}$, where $\mathfrak{S}_n$ denotes the $n$th root mean power, defined via the Archimedean vector lattice functional calculus,
	\item[(ix)] $P_T\bigl(\mathfrak{G}_n(x_1,\dots,x_n)\bigr)=T(x_1,\dots,x_n)$ holds for all $x_{1},\dots,x_{n}\in E^{+}$, where $\mathfrak{G}_n$ denotes the $n$-fold geometric mean, defined by the Archimedean vector lattice functional calculus,
	\item[(x)] $P_T(|z|)=(P_T)_{\C}(z^{\frac{n}{2}}(\bar{z})^{\frac{n}{2}})$ holds for all $z\in E_{\C}$ if $n$ is even, while if $n$ is odd, then $P_T(|z|)=(P_T)_{\C}(z^{\frac{n-1}{2}}(\bar{z})^{\frac{n-1}{2}}|z|)$ holds for every $z\in E_{\C}$,
		\item[(xi)] The map $(x_1,...,x_n)\mapsto\sum_{k=1}^{n}P_T(x_k)$ is total orderization invariant, and
	\item[(xii)] The map $(x_1,..,x_n)\mapsto\sum_{k=1}^{n}P_T(x_k)$ is positively total orderization invariant,
\end{itemize}
\end{theorem}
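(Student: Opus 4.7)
The plan is to exploit the fact that this is explicitly a synthesis theorem: each cited paper already supplies some pairwise equivalences, so the task reduces to organizing the twelve conditions into a cycle (or star) of implications covered by the literature, and filling in the trivial directions. I would first partition the conditions into four natural clusters: the orthosymmetry/vanishing cluster (i),(iv); the $T$-level total orderization cluster (ii),(iii); the polynomial orthogonal additivity cluster (v),(vi),(vii); the functional-calculus representation cluster (viii),(ix),(x); and the $P_T$-level total orderization cluster (xi),(xii). This partition suggests a hub-and-spoke structure in which (i) (or equivalently (v)) plays the role of hub.

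Next I would identify the hub equivalences already in the literature. The classical equivalence (i)$\Leftrightarrow$(v) — orthosymmetry of a bounded symmetric multilinear map versus orthogonal additivity of its associated polynomial on a uniformly complete Archimedean vector lattice — is exactly Ben~Amor's Theorem~18 (and the corresponding Kusraev results). The equivalences (i)$\Leftrightarrow$(iv) are essentially formal: symmetry lets one permute arguments so that an orthogonal pair is adjacent, and then polarization/induction on $k$ converts the vanishing of all two-variable mixed evaluations into the full orthosymmetry condition. The equivalence (v)$\Leftrightarrow$(vi)$\Leftrightarrow$(vii) is the content of Sch's Theorem~2.3 together with the Kusraev representation results cited, where one passes from positive orthogonal additivity to the signed version using the decomposition $x=x^+-x^-$ and the $n$-homogeneity of $P_T$.

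I would then attach the remaining clusters as spokes to this hub. The functional-calculus identities (viii),(ix),(x) are each proved in BusSch-characterizing, Kusraev's ``on the representation of orthogonally additive polynomials'', and BoyRySniga's Lemma~2.6 to be equivalent to orthogonal additivity of $P_T$ on a uniformly complete Archimedean vector lattice, so each is linked directly to (v). The total orderization equivalences (ii)$\Leftrightarrow$(i) and (xi)$\Leftrightarrow$(v) are exactly Theorems~3.14 and~3.17 of Sch-toi. The implications (ii)$\Rightarrow$(iii) and (xi)$\Rightarrow$(xii) are tautological, and the reverse directions (iii)$\Rightarrow$(ii), (xii)$\Rightarrow$(xi) follow by writing each $x_i=x_i^+-x_i^-$, using multilinearity (respectively additivity of $\sum P_T(x_k)$ in each coordinate after the positive/negative split), and invoking the already-established equivalence with orthosymmetry/orthogonal additivity to absorb the sign cases.

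The only genuine obstacle I anticipate is bookkeeping rather than mathematics: each source is phrased in slightly different generality (scalar-valued vs. bornological-valued codomain, uniformly complete vs. Dedekind complete domain, polynomial formulation vs. multilinear formulation), so I would need to verify carefully that each cited implication survives in the common setting stated here, namely a uniformly complete Archimedean $E$ with values in a real separated bornological space $Y$. The bornological-valued versions of the Kusraev-type representation theorems require that the functional calculus expressions on the right-hand sides of (viii),(ix),(x) be well-defined in $E$ (uniform completeness) and that $T$ be bounded (to push forward to $Y$); once these ingredients are in place, the cited proofs go through verbatim, so stitching the cycle together should be essentially a matter of cross-referencing.
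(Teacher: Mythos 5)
The paper provides no proof of Theorem~\ref{T:synth} at all; it is stated explicitly as a synthesis of the cited results (Ben~Amor's Theorem~18, Boyd--Ryan--Snigireva's Lemma~2.6, Buskes--Schwanke's Theorems~2.3~\&~2.4, Kusraeva's main result and Lemma~4, Schwanke's Theorem~2.3, and Theorems~3.14~\&~3.17 of \cite{Sch-toi}), so your cross-referencing strategy is exactly the approach the paper takes. Your hub-and-spoke organization around (i)/(v) and your identification of which source supplies which equivalence match the paper's attribution, and your caveat about verifying uniform hypotheses (uniformly complete domain, bounded $T$, separated bornological codomain) is the right thing to watch for.
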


Out of all the characterizations given above, $(ii)$ alone stands out as unique in that it is the only item that still makes sense if the vector lattice $E$ in Theorem~\ref{T:synth} is replaced by a more general distributive lattice (which may not possess a vector space structure), and if $T$ is not necessarily multilinear. From this point of view, total orderization invariant maps on distributive lattices are generalizations of bounded orthosymmetric multilinear maps and finite sums of bounded orthogonally additive polynomials on uniformly complete Archimedean vector lattices.

This paper illustrates how total orderization invariant maps are intriguing in their own right, even if they are not compatible with a vector space. Our study of these maps on distributive lattices in turn reveals new information regarding orthosymmetric and orthogonally additive maps on vector lattices.

As their name suggests, the distinguishing feature of total orderization invariant maps is that they remain unchanged under the total orderization, which is perhaps the most natural manner in which to transform a finite subset of a distributive lattice into a totally ordered set (see \cite[Section~2]{Sch-toi} and Section~\ref{S:Prelims} of this document below for more details). Other desirable attributes of these maps include the fact that they are symmetric \cite[Proposition~2.9]{Sch-toi} and that they are completely determined by their behavior on totally ordered sets. For example, if $L$ is a distributive lattice, $A$ is a nonempty set, $T\colon L^n\to A$ is totally orderization invariant, $B\subseteq A$, and $T(x_1,...,x_n)\in B$ whenever $x_1,...,x_n\in L$ and $x_1\leq x_2\leq \cdots \leq x_n$, then $T(x_1,...,x_n)\in B$ for all $x_1,...,x_n\in L$.

Despite these interesting properties of total orderization maps, one can see from \cite{Sch-toi} and Section~\ref{S:Prelims} below that total orderizations involve cumbersome expressions. Therefore, determining whether or not a map is total orderization invariant, for example, can be especially laborious. In the spirit of Theorem~\ref{T:synth}, however, we provide in Section~\ref{S:CTOIM} new characterizations of total orderization invariant maps on distributive lattices that are significantly more manageable.

In Section~\ref{S:TOIMH} we then illustrate the utility of these simpler descriptions of totally orderization invariant maps by proving in Theorem~\ref{T:charoftoimultihoms} that a lattice multi-homomorphism between distributive lattices is total orderization invariant if and only if it is symmetric. This result generalizes \cite[Lemma~2.1]{BoBus}, which states that a vector lattice multimorphism between Archimedean vector lattices is symmetric if and only if it is orthosymmetric.

A related result of interest in this context is the following theorem of Kusraev.

\begin{theorem}\label{T:Kus}\cite[Theorem~1]{Kus3}
Let $E$ and $F$ be uniformly complete Archimedean vector lattices, and let $b\colon E\times E\to F$ be a vector lattice bimorphism. Then the following are equivalent.

\begin{itemize}
	\item[(i)] $b$ is symmetric,
	\item[(ii)] $b(x,x)-b(y,y)=b(x-y, x+y)$ for all $x,y\in E$,
		\item[(iii)] $b(x,x)\wedge b(y,y)\leq b(x,y)\leq b(x,x)\vee b(y,y)$ for all $x,y\in E^+$ 
			\item[(iv)] $b(x\wedge y, x\wedge y)=b(x,x)\wedge b(y,y)$ and $b(x\vee y, x\vee y)=b(x,x)\vee b(y,y)$ for all $x,y\in E^+$,
				\item[(v)] $b(x,y) = b(y,x)$ for all disjoint $x,y\in E$,
					\item[(vi)] $b(x,|x|)=b(x^+, x^+)-b(x^-, x^-)$,
						\item[(vii)] $b$ is orthosymmetric, and
							\item[(viii)] $b$ is positively semidefinite. 
\end{itemize}
\end{theorem}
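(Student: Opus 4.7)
The plan is to establish the equivalences via a cycle of implications, centered on the trivial polarization (i)$\Leftrightarrow$(ii) and closing with the single hard direction (vii)$\Rightarrow$(i), which is the only step requiring uniform completeness of $E$.

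The equivalence (i)$\Leftrightarrow$(ii) is immediate: expanding $b(x-y, x+y) = b(x,x) - b(y,y) + b(x,y) - b(y,x)$ shows that (ii) is literally the assertion $b(x,y) = b(y,x)$. From (i), each of (iii)--(vi) and (viii) follows by direct bilinear expansion combined with the bimorphism property that each slot of $b$ is a lattice homomorphism and therefore distributes over $\vee$, $\wedge$, and $|\cdot|$. For instance, iterating the distributive law in both slots yields $b(x\wedge y, x\wedge y) = b(x,x) \wedge b(x,y) \wedge b(y,x) \wedge b(y,y)$, which collapses under (i) to $b(x,x) \wedge b(y,y)$; the join-version is analogous, giving (iv). Property (vi) follows by expanding $b(x^+ - x^-, x^+ + x^-)$ and cancelling the cross-terms using symmetry, and (viii) becomes clear once (vii) is in hand, since then $b(x,x) = b(x^+, x^+) + b(x^-, x^-) \geq 0$.

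The converses from (iii)--(vi) and (viii) back to (vii) all rest on a single observation: for disjoint $x, y \in E^+$, the bimorphism property forces $b(x,y) \wedge b(x,x) = 0$ and $b(x,y) \wedge b(y,y) = 0$, obtained by applying the lattice-homomorphism property in the appropriate slot to $b(x, x\wedge y)$ and $b(x\wedge y, y)$ and using $x\wedge y = 0$. Hence $b(x,y)$ is disjoint from $b(x,x) + b(y,y)$. Combined with an upper bound such as $b(x,y) \leq b(x,x) \vee b(y,y)$ (in (iii)) or $b(x,y) + b(y,x) \leq b(x,x) + b(y,y)$ (which drops out of $b(x-y, x-y) \geq 0$ in (viii)), this squeezes $b(x,y)$ to $0$ and yields orthosymmetry on $E^+$; the general case reduces by decomposing $x, y$ into positive and negative parts and recombining.

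The crux is (vii)$\Rightarrow$(i), and this is where the main obstacle lies. The strategy is to exploit that $E$ is uniformly complete and Archimedean: for $x, y \in E^+$ the principal ideal $E_{x\vee y}$ is an $M$-space with strong order unit $x\vee y$ and, by the Kakutani representation, is isomorphic to $C(K)$ for some compact Hausdorff $K$, inducing a canonical $f$-algebra structure via pointwise multiplication. The key fact is that any orthosymmetric bimorphism restricted to such an $f$-algebra factors through its multiplication, i.e.\ there exists a lattice homomorphism $T\colon E_{x\vee y} \to F$ with $b(f,g) = T(fg)$; this is the representation theorem for orthosymmetric bimorphisms on $f$-algebras, whose proof reduces to showing that $b$ depends only on the product $fg$, which one obtains from orthosymmetry by approximating by step functions and using bilinearity. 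Once this factorisation is in hand, symmetry of $b$ on $E^+$ is immediate from commutativity of pointwise multiplication, and the general case follows by writing $x = x^+ - x^-$, $y = y^+ - y^-$ and using orthosymmetry to annihilate the four disjoint cross-terms.
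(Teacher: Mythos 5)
The paper does not prove Theorem~\ref{T:Kus}; it is imported wholesale from Kusraev~\cite[Theorem~1]{Kus3} as motivating background, so there is no in-house proof against which to compare your attempt. Judged on its own terms, your cycle of implications is structurally sound and follows the standard route, but a few steps are compressed past the point of being convincing. First, passing from $(i)$ to $(iii)$ and $(iv)$ is not ``direct bilinear expansion'': expanding $b(x\wedge y,x\wedge y)$ gives the four-fold meet $b(x,x)\wedge b(x,y)\wedge b(y,x)\wedge b(y,y)$, and collapsing this to $b(x,x)\wedge b(y,y)$ already presupposes the Cauchy--Schwarz inequality $(iii)$, which itself needs the distributive-lattice identities $b(x\wedge y,x\vee y)=b(x,y)\vee\bigl[b(x,x)\wedge b(y,y)\bigr]=b(x,y)\wedge\bigl[b(x,x)\vee b(y,y)\bigr]$ (this is precisely the computation carried out in the proof of Theorem~\ref{T:charoftoimultihoms} of the present paper). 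Second, your ``single observation'' that $b(x,y)\wedge b(x,x)=0$ and $b(x,y)\wedge b(y,y)=0$ for disjoint $x,y\in E^+$ does settle $(iii)\Rightarrow(vii)$, $(iv)\Rightarrow(vii)$, and $(viii)\Rightarrow(vii)$, but $(v)\Rightarrow(vii)$ and $(vi)\Rightarrow(vii)$ need an additional ingredient you never state: for disjoint $x,y\geq 0$ one has $x+y=x\vee y$, hence $b(x,x)+2b(x,y)+b(y,y)=b(x\vee y,x\vee y)=b(x,x)\vee b(x,y)\vee b(y,y)$ under $(v)$, and since $b(x,y)$ is disjoint from $b(x,x)\vee b(y,y)$ this equality forces $b(x,y)=0$. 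Third, the crux $(vii)\Rightarrow(i)$ is gestured at rather than proved: the claim that an orthosymmetric bimorphism on $E_{x\vee y}\cong C(K)$ factors as $b(f,g)=T(fg)$ through a lattice homomorphism $T$ is itself a nontrivial representation theorem (due to Buskes--van Rooij and Kusraev), and the parenthetical appeal to ``approximating by step functions'' is a pointer to that proof, not the proof itself. You should either carry that argument out in detail or cite the factorization result explicitly and verify its hypotheses here.
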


Glancing at this result, one sees that items $(i), (iii)$, and $(iv)$ all make sense even if $b$ was not necessarily bilinear and if $E$ in $(i)$, and $E^+$ in $(iii)$ and $(iv)$, were replaced with a more general distributive lattice. Given that vector lattice multimorphisms are orthosymmetric if and only if they are total orderization invariant (vector lattice multimorphisms are order bounded), one can also generalize $(vii)$ to the more general setting of lattice bi-homomorphisms $b$ on distributive lattices that are total orderization invariant. 

The equivalence of all these assertions no longer holds in the more general distributive lattice setting. For a simple counterexample, consider the following weighted geometric mean $b\colon \mathbb{R}^+\times\mathbb{R}^+\to \mathbb{R}^+$ defined by
\[
b(x,y):=x^{1/3}y^{2/3}.
\]
Then $b$ is a lattice bi-homomorphism (though not bilinear) and $b(z,z)=z$ for all $z\in \mathbb{R}^+$. Thus $b$ satisfies the identities in $(iv)$ above. However, $b$ is not symmetric; i.e. does not satisfy item $(i)$ in Theorem~\ref{T:Kus}.

However, as mentioned previously, we show in Theorem~\ref{T:charoftoimultihoms} that conditions $(i)$ and $(vii)$ remain equivalent in our more general setting and for several variables. Furthermore, in this setting, and for any finite number of arguments, the equivalence of $(iii)$ and $(iv)$ for symmetric lattice multi-homomorphisms and the implication $(i)/(vii)\implies(iii)/(iv)$ are also proven in Lemma~\ref{L:kusraev} and Theorem~\ref{T:diags}, respectively.

We proceed with some preliminary material.

\section{Preliminaries}\label{S:Prelims}

We refer the reader to \cite{AB, Birk, LuxZan1, Zan2}  for any unexplained terminology or basic theory regarding distributive lattices and vector lattices.
Throughout this paper, $\mathbb{N}$ stands for the set of strictly positive integers, and the ordered field of real numbers is denoted by $\mathbb{R}$. Given a set $A$ and $n\in\N$, we as usual denote the $n$-fold Cartesian product of $A$ by $A^n$.

\begin{notation}
We use the convenient notation $[n]:=\{1,...,n\}$ throughout the rest of this paper.
\end{notation}

Let $L$ and $M$ be lattices. A map $T\colon L\to M$ is called a \textit{lattice homomorphism} if $T(x\vee y)= T(x)\vee T(y)$ and $T(x\wedge y)=T(x)\wedge T(y)$ for every $x,y\in L$. More generally, for $n\in\N$, we call a map $T\colon L^n\to M$ a \textit{lattice $n$-homomorphism} if
\[
T(x_1,..., x_{i-1}, x_i\vee y, x_{i+1}, ..., x_n)=T(x_1,..., x_n)\vee T(x_1,..., x_{i-1}, y, x_{i+1}, ..., x_n)
\]
and
\[
T(x_1,..., x_{i-1}, x_i\wedge y, x_{i+1}, ..., x_n)=T(x_1,..., x_n)\wedge T(x_1,..., x_{i-1}, y, x_{i+1}, ..., x_n)
\]
both hold for all $x_1,....,x_n, y\in L$ and every $i\in\{1,...,n\}$. If $T$ is a lattice $n$-homomorphism for some $n\in\N$, we call $T$ a \textit{lattice multi-homomorphism}.

We next recall a definition given in  \cite{Sch-toi}, which is central to the definition of total orderization invariant maps.

\begin{definition}
Let $L$ be a distributive lattice. For $n\in\mathbb{N}$, $x_1,...,x_n\in L$, and $k\in[n]$, we define the \textit{$k$th median} of $\{x_1,...,x_n\}$ to be
\[
\mathcal{M}_k(x_1,...,x_n):=\underset{\substack{i_1,i_2,...,i_{n+1-k}\in[n] \\ i_1<i_2<\cdots<i_{n+1-k}}}{\bigvee}\left(\bigwedge_{j=1}^{n+1-k}x_{i_j}\right)=\underset{\substack{i_1,i_2,...,i_{n+1-k}\in[n] \\ i_1<i_2<\cdots<i_{n+1-k}}}{\bigwedge}\left(\bigvee_{j=1}^{n+1-k}x_{i_j}\right).
\]
\end{definition}

One can see that the first median of $\{x_1,...,x_n\}$ is $\bigwedge_{i=1}^n x_i$ and that the $n$th median of this set is $\bigvee_{i=1}^n x_i$. More generally, the $k$th median is an abstraction of the notion of $k$th smallest element in a totally ordered set, as the following proposition states.

\begin{proposition}\cite[Proposition~2.5]{Sch-toi}\label{P:3ton}
	Let $n\in\mathbb{N}$ and $L$ be a distributive lattice. Then
	\begin{itemize}
		\item[$(i)$] $\mathcal{M}_k\colon L^n\to L$ is a symmetric function for all $k\in[n]$,
		\item[$(ii)$] if $x_1,x_2,...,x_n\in L$ satisfy $x_1\leq x_2\leq\cdots\leq x_n$, then $\mathcal{M}_k(x_1,...,x_n)=x_k$ for each $k\in[n]$, and
		\item[$(iii)$] $\mathcal{M}_1(x_1,...,x_n)\leq\mathcal{M}_2(x_1,...,x_n)\leq\cdots\leq\mathcal{M}_n(x_1,...,x_n)$ for every $x_1,x_2,...,x_n\in L$.
	\end{itemize}
\end{proposition}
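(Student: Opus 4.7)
The plan is to treat each of the three items as an elementary combinatorial calculation with the join-of-meets representation of $\mathcal{M}_k$, switching to the dual meet-of-joins form only when it shortens the argument. The single observation that drives everything is that the indexing family $\{S\subseteq[n]:|S|=n+1-k\}$ is stable under permutations of $[n]$, and that it is related across consecutive values of $k$ by deletion of a single element.

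For (i), I would note that any $\sigma\in S_n$ induces a bijection on the $(n+1-k)$-element subsets of $[n]$ via $S\mapsto\sigma(S)$, and that $\bigwedge_{j=1}^{n+1-k}x_{\sigma(i_j)}=\bigwedge_{i\in\sigma(S)}x_i$. Hence the join-of-meets expressions for $\mathcal{M}_k(x_1,\dots,x_n)$ and $\mathcal{M}_k(x_{\sigma(1)},\dots,x_{\sigma(n)})$ run through the same lattice elements, so they coincide.

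For (ii), assuming $x_1\leq\cdots\leq x_n$, the chain condition collapses $\bigwedge_{j=1}^{n+1-k}x_{i_j}=x_{i_1}$ on any subset $S=\{i_1<\cdots<i_{n+1-k}\}$. As $S$ varies over the $(n+1-k)$-subsets of $[n]$, the minimum $i_1$ ranges exactly over $[k]$ (since $|S|=n+1-k$ forces $i_1\leq k$, and every $i\in[k]$ is realized as $\min\{i,i+1,\dots,i+n-k\}$), so
\[
\mathcal{M}_k(x_1,\dots,x_n)=\bigvee_{i=1}^{k}x_i=x_k.
\]
For (iii), every $(n+1-k)$-subset $S\subseteq[n]$ contains an $(n-k)$-subset $S'$ obtained by deleting one element; then $\bigwedge_{i\in S}x_i\leq\bigwedge_{i\in S'}x_i\leq\mathcal{M}_{k+1}(x_1,\dots,x_n)$, and joining over $S$ yields $\mathcal{M}_k\leq\mathcal{M}_{k+1}$.

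I do not foresee a genuine obstacle: each item reduces to a purely combinatorial statement about subsets of $[n]$, and distributivity of $L$ enters only through the equality of the two representations built into the definition of $\mathcal{M}_k$, which may be invoked freely whenever the dual form yields a cleaner calculation.
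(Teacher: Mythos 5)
Your proof is correct, but note that the paper does not actually prove this proposition: it is imported verbatim from \cite[Proposition~2.5]{Sch-toi}, so there is no in-paper argument to compare against. Each of your three steps is sound. For $(i)$, the observation that a permutation $\sigma$ of $[n]$ merely permutes the family of $(n+1-k)$-subsets, sending the meet indexed by $S$ to the meet indexed by $\sigma(S)$, is exactly the right reason the join-of-meets expression is symmetric. For $(ii)$, the collapse $\bigwedge_{j}x_{i_j}=x_{i_1}$ under the chain hypothesis, together with the fact that $\min S$ ranges over $[k]$ as $S$ ranges over $(n+1-k)$-subsets of $[n]$, gives $\mathcal{M}_k=\bigvee_{i=1}^k x_i=x_k$. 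For $(iii)$, deleting one element from each $(n+1-k)$-subset $S$ yields an $(n-k)$-subset $S'$ with $\bigwedge_{i\in S}x_i\leq\bigwedge_{i\in S'}x_i\leq\mathcal{M}_{k+1}$, and joining over $S$ gives $\mathcal{M}_k\leq\mathcal{M}_{k+1}$; this is well-defined for $k\in[n-1]$, which is all that is required. One small remark worth making explicit: your entire argument uses only the join-of-meets representation and never invokes distributivity, so items $(i)$--$(iii)$ hold in any lattice once $\mathcal{M}_k$ is \emph{defined} by that representation; distributivity of $L$ is only needed to guarantee the agreement of the join-of-meets and meet-of-joins forms built into the definition.
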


Proposition~\ref{P:3ton}$(iii)$ above suggests the following definition, which is introduced in \cite{Sch-toi}.

\begin{definition}
	Given a distributive lattice $L$, $n\in\mathbb{N}$, and $x_1,...,x_n\in L$, we write
	\[
	to(\{x_1,...,x_n\}):=\{\mathcal{M}_1(x_1,...,x_n),\mathcal{M}_2(x_1,...,x_n),\dots,\mathcal{M}_n(x_1,...,x_n)\}
	\]
	and call the totally ordered set $to(\{x_1,...,x_n\})$ the \textit{total orderization} of $\{x_1,...,x_n\}$.

We add here that for $(x_1,...,x_n)\in L^n$, we define the \textit{total orderization} of $(x_1,...,x_n)$ in a similar manner:
\[
to(x_1,...,x_n):=\Bigl(\mathcal{M}_1(x_1,...,x_n),\mathcal{M}_2(x_1,...,x_n),\dots,\mathcal{M}_n(x_1,...,x_n)\Bigr).
\]
\end{definition}

Next we formally define the maps which are the central focus of this paper.

\begin{definition}
	Let $n\in\mathbb{N}$, assume $L$ is a distributive lattice, and suppose $A$ is a nonempty set. A map $T\colon L^n\to A$ is said to be \textit{total orderization invariant} if
	\[
	T(x_1,...,x_n)=T\Bigl(\mathcal{M}_1(x_1,...,x_n),\mathcal{M}_2(x_1,...,x_n),\dots,\mathcal{M}_n(x_1,...,x_n)\Bigr)
	\]
	holds for every $x_1,...,x_n\in L$.
\end{definition}

Examples of total orderization invariant maps include any symmetric function defined on a totally ordered set, the $k$th median maps on distributive lattices, addition in vector lattices, $f$-algebra multiplication on Archimedean semiprime $f$-algebras, positively homogeneous functions defined on Archimedean vector lattices via functional calculus,  bounded orthosymmetric multilinear maps on uniformly complete Archimedean vector lattices with separated real bornological space as codomain, and finite sums of bounded orthogonally additive polynomials in the same setting.
	
\section{Characterizations of Total Orderization Invariant Maps}\label{S:CTOIM}

We prove the main theorem of this paper (Theorem~\ref{T:charsoftoi}), which provides characterizations of total orderization invariant maps on distributive lattices which are substantially easier to work with than the definition given in \cite{Sch-toi}. We first require two lemmas. Lemma~\ref{L:Mk'saretoi} is a special case of Theorem~\ref{T:charsoftoi}.

\begin{lemma}\label{L:Mk'saretoi} Let $L$ be a distributive lattice and $k,n\in\mathbb{N}$ with $k\leq n$. Then for all $x_1,...,x_n\in L$, we have
\[
\M_k(x_1,...,x_n)=\M_k(x_1\wedge x_2, x_1\vee x_2, x_3,...,x_n).
\]
\end{lemma}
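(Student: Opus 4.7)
The plan is to work directly from the explicit join formula
\[
\M_k(x_1,\ldots,x_n)=\bigvee_{\substack{S\subseteq[n]\\ |S|=n+1-k}}\,\bigwedge_{i\in S}x_i
\]
and compare it, piece by piece, with the corresponding join built from $y_1:=x_1\wedge x_2$, $y_2:=x_1\vee x_2$, $y_i:=x_i$ for $i\ge 3$. I would partition the $(n+1-k)$-subsets $S\subseteq[n]$ by their intersection with $\{1,2\}$, giving four classes: (A) $S\cap\{1,2\}=\emptyset$, (B) $S\cap\{1,2\}=\{1\}$, (C) $S\cap\{1,2\}=\{2\}$, (D) $\{1,2\}\subseteq S$. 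On class (A) the meet $\bigwedge_{i\in S}x_i$ equals $\bigwedge_{i\in S}y_i$ trivially, while on class (D) the two agree because $y_1\wedge y_2=(x_1\wedge x_2)\wedge(x_1\vee x_2)=x_1\wedge x_2$.

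The heart of the argument is to pair each class-(B) subset $S=\{1\}\cup T$ with the class-(C) subset $S'=\{2\}\cup T$, where $T\subseteq\{3,\ldots,n\}$ has size $n-k$, and set $R_T:=\bigwedge_{i\in T}x_i$ (read as vacuous when $T=\emptyset$). Distributivity of $L$ then yields
\[
\bigl(\bigwedge_{i\in S}x_i\bigr)\vee\bigl(\bigwedge_{i\in S'}x_i\bigr)=(x_1\wedge R_T)\vee(x_2\wedge R_T)=(x_1\vee x_2)\wedge R_T,
\]
while the same pair computed from the $y$'s gives
\[
\bigl(\bigwedge_{i\in S}y_i\bigr)\vee\bigl(\bigwedge_{i\in S'}y_i\bigr)=\bigl((x_1\wedge x_2)\wedge R_T\bigr)\vee\bigl((x_1\vee x_2)\wedge R_T\bigr)=(x_1\vee x_2)\wedge R_T,
\]
the last equality because the first joinand sits below the second. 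Thus each paired contribution is the same on both sides.

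Taking the join over all subsets $S$ by summing the contributions from the four classes then gives the identity claimed in the lemma. I would expect the only real obstacle to be bookkeeping rather than insight: one must check that the pairing $\{1\}\cup T\leftrightarrow\{2\}\cup T$ is a genuine bijection between classes (B) and (C) of $(n+1-k)$-subsets, and handle the boundary cases $k=1$ (only class (D) appears, and the identity reduces to $y_1\wedge y_2\wedge x_3\wedge\cdots\wedge x_n = x_1\wedge\cdots\wedge x_n$) and $k=n$ (class (D) is empty and $T=\emptyset$, so the pair's contribution is simply $x_1\vee x_2$). Distributivity is used only through the single identity $(a\wedge c)\vee(b\wedge c)=(a\vee b)\wedge c$, which is precisely where the hypothesis enters.
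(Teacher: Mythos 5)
Your proposal is correct and follows essentially the same approach as the paper: both partition the $(n+1-k)$-element subsets of $[n]$ by their intersection with $\{1,2\}$ and use distributivity to reconcile the $y$-side with the $x$-side. The only difference is organizational: you pair each class-(B) subset $\{1\}\cup T$ with its class-(C) counterpart $\{2\}\cup T$ and compare the paired contributions directly, whereas the paper first absorbs the class-(B) contribution on the $y$-side into class (C) (since $(x_1\wedge x_2)\wedge R_T\leq(x_1\vee x_2)\wedge R_T$) and then distributes $x_1\vee x_2$ over $R_T$; your pairing device also lets you fold the boundary cases $k=1$, $k=2$, $k=n$ into the general argument, where the paper splits off $k=1$ and $k=2$ as separate cases to avoid empty joins and meets.
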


\begin{proof}
Let $x_1,...,x_n\in L$. First note that for $k=1$, the result is evident. For $k=2$ we have
\begin{align*}
\M_2&(x_1,...,x_n)=\underset{\begin{subarray}{c}
		i_1,\dots, i_{n-1}\in [n] \\
		i_1<\cdots<i_{n-1}
\end{subarray}}{\bigvee}\left(\bigwedge_{j=1}^{n-1}x_{i_j}\right)\\
&=(x_1\wedge\cdots\wedge x_{n-1})\vee (x_1\wedge\cdots\wedge x_{n-2}\wedge x_n)\vee(x_1\wedge\cdots\wedge x_{n-3}\wedge x_{n-1}\wedge x_n)\\
&\qquad\qquad\vee\cdots\vee(x_2\wedge\cdots\wedge x_n).
\end{align*}
In the same manner, and using the above string of equalities in the last equality below, we have
\begin{align*}
&\M_2(x_1\wedge x_2, x_1\vee x_2, x_3,...,x_n)\\
	\\
	&=\Bigl((x_1\wedge x_2)\wedge (x_1\vee x_2)\wedge x_3\wedge\cdots\wedge x_{n-1}\Bigr)\vee \Bigl((x_1\wedge x_2)\wedge (x_1\vee x_2)\wedge x_3\wedge\cdots\wedge x_{n-2}\wedge x_n\Bigr)\\
	&\qquad\qquad\vee\Bigl((x_1\wedge x_2)\wedge (x_1\vee x_2)\wedge x_3\wedge\cdots\wedge x_{n-3}\wedge x_{n-1}\wedge x_n\Bigr)\\
	&\qquad\qquad\vee\cdots\vee\Bigl((x_1\wedge x_2)\vee (x_1\vee x_2)\wedge x_4\wedge\cdots\wedge x_n\Bigr)\\
	&\qquad\qquad\vee \Bigl((x_1\wedge x_2)\wedge x_3\wedge\cdots\wedge x_n\Bigr)\vee \Bigl((x_1\vee x_2)\wedge x_3\wedge\cdots\wedge x_n\Bigr)\\
	\\
	\\
	\\
	&=(x_1\wedge x_2\wedge x_3\wedge\cdots\wedge x_{n-1})\vee (x_1\wedge x_2\wedge x_3\wedge\cdots\wedge x_{n-2}\wedge x_n)\\
	&\qquad\qquad\vee(x_1\wedge x_2\wedge x_3\wedge\cdots\wedge x_{n-3}\wedge x_{n-1}\wedge x_n)\\
	&\qquad\qquad\vee\cdots\vee (x_1\wedge x_2\wedge x_4\wedge\cdots\wedge x_n)\\
	&\qquad\qquad\vee \Bigl(x_1\wedge x_2\wedge x_3\wedge\cdots\wedge x_n\Bigr)\vee \Bigl((x_1\vee x_2)\wedge x_3\wedge\cdots\wedge x_n\Bigr)\\
	\\
	&=(x_1\wedge x_2\wedge x_3\wedge\cdots\wedge x_{n-1})\vee (x_1\wedge x_2\wedge x_3\wedge\cdots\wedge x_{n-2}\wedge x_n)\\
	&\qquad\qquad\vee(x_1\wedge x_2\wedge x_3\wedge\cdots\wedge x_{n-3}\wedge x_{n-1}\wedge x_n)\\
	&\qquad\qquad\vee\cdots\vee (x_1\wedge x_2\wedge x_4\wedge\cdots\wedge x_n) \vee \Bigl((x_1\vee x_2)\wedge x_3\wedge\cdots\wedge x_n\Bigr)\\
	\\
&=(x_1\wedge x_2\wedge x_3\wedge\cdots\wedge x_{n-1})\vee (x_1\wedge x_2\wedge x_3\wedge\cdots\wedge x_{n-2}\wedge x_n)\\
&\qquad\qquad\vee(x_1\wedge x_2\wedge x_3\wedge\cdots\wedge x_{n-3}\wedge x_{n-1}\wedge x_n)\\
&\qquad\qquad\vee\cdots\vee (x_1\wedge x_2\wedge x_4\wedge\cdots\wedge x_n)\vee (x_1\wedge x_3\wedge\cdots\wedge x_n)\vee (x_2\wedge x_3\wedge\cdots\wedge x_n)\\
\\
&=\M_2(x_1,...,x_n).
\end{align*}

Next we assume that $3\leq k\leq n$ and note that this supposition is needed in order to avoid empty suprema and infima in the string of equalities below. Set $y_1:=x_1\wedge x_2$, $y_2:=x_1\vee x_2$, and $y_i:=x_i$ for all $i\in\{3,...,n\}$. We need to show that
\[
\M_k(y_1, ..., y_n)=\M_k(x_1, ..., x_n).
\]

Using basic manipulation of suprema and infima, including distributivity, we obtain
\begin{align*}
&\M_k(y_1,...,y_n)=\underset{\begin{subarray}{c}
		i_1,\dots, i_{n+1-k}\in [n] \\
		i_1<\cdots<i_{n+1-k}
\end{subarray}}{\bigvee}\left(\bigwedge_{j=1}^{n+1-k}y_{i_j}\right)\\
\\
\\
\\
\\
&=\underset{\begin{subarray}{c}
		3\leq i_1,\dots, i_{n+1-k}\leq n \\
		i_1<\cdots<i_{n+1-k}
\end{subarray}}{\bigvee}\left(\bigwedge_{j=1}^{n+1-k}y_{i_j}\right)\vee\underset{\begin{subarray}{c}
i_1=1\\
3\leq i_2,\dots, i_{n+1-k}\leq n \\
i_2<\cdots<i_{n+1-k}
\end{subarray}}{\bigvee}\left(\bigwedge_{j=1}^{n+1-k}y_{i_j}\right)\\
&\qquad\qquad\vee\underset{\begin{subarray}{c}
i_1=2\\
3\leq i_2,\dots, i_{n+1-k}\leq n \\
i_2<\cdots<i_{n+1-k}
\end{subarray}}{\bigvee}\left(\bigwedge_{j=1}^{n+1-k}y_{i_j}\right)\vee\underset{\begin{subarray}{c}
i_1=1\\
i_2=2\\
3\leq i_3,\dots, i_{n+1-k}\leq n \\
i_3<\cdots<i_{n+1-k}
\end{subarray}}{\bigvee}\left(\bigwedge_{j=1}^{n+1-k}y_{i_j}\right)\\
\\
&=\underset{\begin{subarray}{c}
		3\leq i_1,\dots, i_{n+1-k}\leq n \\
		i_1<\cdots<i_{n+1-k}
\end{subarray}}{\bigvee}\left(\bigwedge_{j=1}^{n+1-k}x_{i_j}\right)\vee\underset{\begin{subarray}{c}
		3\leq i_2,\dots, i_{n+1-k}\leq n \\
		i_2<\cdots<i_{n+1-k}
\end{subarray}}{\bigvee}\left[\left(\bigwedge_{j=2}^{n+1-k}x_{i_j}\right)\wedge(x_1\wedge x_2)\right]\\
&\qquad\qquad\vee\underset{\begin{subarray}{c}
		3\leq i_2,\dots, i_{n+1-k}\leq n \\
		i_2<\cdots<i_{n+1-k}
\end{subarray}}{\bigvee}\left[\left(\bigwedge_{j=2}^{n+1-k}x_{i_j}\right)\wedge(x_1\vee x_2)\right]\\
&\qquad\qquad\vee\underset{\begin{subarray}{c}
		3\leq i_3,\dots, i_{n+1-k}\leq n \\
		i_3<\cdots<i_{n+1-k}
\end{subarray}}{\bigvee}\left[\left(\bigwedge_{j=3}^{n+1-k}x_{i_j}\right)\wedge(x_1\wedge x_2)\wedge(x_1\vee x_2)\right]\\
\\
&=\underset{\begin{subarray}{c}
		3\leq i_1,\dots, i_{n+1-k}\leq n \\
		i_1<\cdots<i_{n+1-k}
\end{subarray}}{\bigvee}\left(\bigwedge_{j=1}^{n+1-k}x_{i_j}\right)\vee\underset{\begin{subarray}{c}
		3\leq i_2, \dots, i_{n+1-k}\leq n \\
		i_2<\cdots<i_{n+1-k}
\end{subarray}}{\bigvee}\left[\left(\bigwedge_{j=2}^{n+1-k}x_{i_j}\right)\wedge(x_1\vee x_2)\right]\\
&\qquad\qquad\vee\underset{\begin{subarray}{c}
		3\leq i_3,\dots, i_{n+1-k}\leq n \\
		i_3<\cdots<i_{n+1-k}
\end{subarray}}{\bigvee}\left[\left(\bigwedge_{j=3}^{n+1-k}x_{i_j}\right)\wedge(x_1\wedge x_2)\right]\\
&=\underset{\begin{subarray}{c}
		3\leq i_1,\dots, i_{n+1-k}\leq n \\
		i_1<\cdots<i_{n+1-k}
\end{subarray}}{\bigvee}\left(\bigwedge_{j=1}^{n+1-k}x_{i_j}\right)\\
&\qquad\qquad\vee\underset{\begin{subarray}{c}
		3\leq i_2, \dots, i_{n+1-k}\leq n \\
		i_2<\cdots<i_{n+1-k}
\end{subarray}}{\bigvee}\left[\left\{\left(\bigwedge_{j=2}^{n+1-k}x_{i_j}\right)\wedge x_1\right\} \vee \left\{\left(\bigwedge_{j=2}^{n+1-k}x_{i_j}\right)\wedge x_2\right\}\right]\\
&\qquad\qquad\vee\underset{\begin{subarray}{c}
		3\leq i_3,\dots, i_{n+1-k}\leq n \\
		i_3<\cdots<i_{n+1-k}
\end{subarray}}{\bigvee}\left[\left(\bigwedge_{j=3}^{n+1-k}x_{i_j}\right)\wedge(x_1\wedge x_2)\right]\\
\\
&=\underset{\begin{subarray}{c}
		3\leq i_1,\dots, i_{n+1-k}\leq n \\
		i_1<\cdots<i_{n+1-k}
\end{subarray}}{\bigvee}\left(\bigwedge_{j=1}^{n+1-k}x_{i_j}\right)\vee\underset{\begin{subarray}{c}
		3\leq i_2, \dots, i_{n+1-k}\leq n \\
		i_2<\cdots<i_{n+1-k}
\end{subarray}}{\bigvee}\left\{\left(\bigwedge_{j=2}^{n+1-k}x_{i_j}\right)\wedge x_1\right\}\\
&\qquad\qquad\vee \underset{\begin{subarray}{c}
3\leq i_2, \dots, i_{n+1-k}\leq n \\
i_2<\cdots<i_{n+1-k}
\end{subarray}}{\bigvee}\left\{\left(\bigwedge_{j=2}^{n+1-k}x_{i_j}\right)\wedge x_2\right\}\\
&\qquad\qquad\vee\underset{\begin{subarray}{c}
		3\leq i_3,\dots, i_{n+1-k}\leq n \\
		i_3<\cdots<i_{n+1-k}
\end{subarray}}{\bigvee}\left[\left(\bigwedge_{j=3}^{n+1-k}x_{i_j}\right)\wedge(x_1\wedge x_2)\right]\\
\\
&=\underset{\begin{subarray}{c}
		i_1,\dots, i_{n+1-k}\in [n] \\
		i_1<\cdots<i_{n+1-k}\\
		i_j\geq 3\ \forall\ j\in[n+1-k]
\end{subarray}}{\bigvee}\left(\bigwedge_{j=1}^{n+1-k}x_{i_j}\right)\vee\underset{\begin{subarray}{c}
		i_1,\dots, i_{n+1-k}\in [n] \\
		i_1<\cdots<i_{n+1-k}\\
		i_1=1\\
		i_j\geq 3\ \forall\ j\in[n+1-k]\setminus\{1\}
\end{subarray}}{\bigvee}\left(\bigwedge_{j=1}^{n+1-k}x_{i_j}\right)\\
&\qquad\qquad\vee \underset{\begin{subarray}{c}
		i_1,\dots, i_{n+1-k}\in [n] \\
		i_1<\cdots<i_{n+1-k}\\
		i_1=2\\
		i_j\geq 3\ \forall\ j\in[n+1-k]\setminus\{1\}
\end{subarray}}{\bigvee}\left(\bigwedge_{j=1}^{n+1-k}x_{i_j}\right)\vee\underset{\begin{subarray}{c}
		i_1,\dots, i_{n+1-k}\in [n] \\
		i_1<\cdots<i_{n+1-k}\\
		i_1=1\\
		i_2=2\\
		i_j\geq 3\ \forall\ j\in[n+1-k]\setminus\{1,2\}
\end{subarray}}{\bigvee}\left(\bigwedge_{j=1}^{n+1-k}x_{i_j}\right)\\
\\
&=\underset{\begin{subarray}{c}
		i_1,\dots, i_{n+1-k}\in [n] \\
		i_1<\cdots<i_{n+1-k}
\end{subarray}}{\bigvee}\left(\bigwedge_{j=1}^{n+1-k}x_{i_j}\right)\\
\\
&=\M_k(x_1,...,x_n).
\end{align*} 
\end{proof}

We will utilize the following notation in our next lemma, as well as in Theorem~\ref{T:charsoftoi}. It proves useful when considering $k$th medians relative to a subset of a given set of elements in a distributive lattice.

\begin{notation}
Let $L$ be a distributive lattice, put $k,m,n\in\mathbb{N}$ with $k\leq m\leq n$, and let $x_1,...,x_n\in L$. If we are interested in the $k$th median relative to the subset $\{x_1,...,x_m\}$, and not the entire set $\{x_1,...,x_n\}$, then to emphasize this subtle difference, we write
\[
\M_{k,m}(x_1,...,x_m):=\M_k(x_1,...,x_m).
\]
\end{notation}

Lemma~\ref{L:nton+1} below is an essential ingredient to Theorem~\ref{T:charsoftoi} and will be utilized repeatedly in its proof.

\begin{lemma}\label{L:nton+1} Let $L$ be a distributive lattice and $k,m\in\mathbb{N}$ with $k\leq m$. Then
\[
\M_{k,m}(x_1,...,x_m)\wedge\Bigl[\M_{k-1,m}(x_1,...,x_m)\vee x_{m+1}\Bigr]=\M_{k,m+1}(x_1,...,x_{m+1})	
\]
holds for every $x_1,...,x_{m+1}\in L$.
\end{lemma}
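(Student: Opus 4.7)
The plan is to expand $\M_{k, m+1}(x_1, \dots, x_{m+1})$ directly from its supremum-of-meets definition and perform a case analysis on whether the index $m+1$ appears in each term. By definition, $\M_{k, m+1}(x_1, \dots, x_{m+1})$ is the join, over all strictly increasing $(m+2-k)$-tuples of indices from $[m+1]$, of the corresponding meets of the $x_{i_j}$. I would partition this join into two pieces: the terms whose index set is contained in $[m]$, and the terms for which $m+1$ is the largest index. The first piece is a join of meets over size-$(m+2-k)$ subsets of $[m]$, and since $m+2-k = m - (k-1) + 1$, this is precisely $\M_{k-1, m}(x_1, \dots, x_m)$. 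The second piece, after factoring out the common $x_{m+1}$, is a join of meets over size-$(m+1-k)$ subsets of $[m]$, which equals $\M_{k, m}(x_1, \dots, x_m) \wedge x_{m+1}$. Taking these together yields
\[
\M_{k, m+1}(x_1, \dots, x_{m+1}) = \M_{k-1, m}(x_1, \dots, x_m) \vee \bigl[\M_{k, m}(x_1, \dots, x_m) \wedge x_{m+1}\bigr].
\]

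Next I would show that the left-hand side of the lemma simplifies to the same expression. Using distributivity of $L$,
\[
\M_{k, m}(x_1, \dots, x_m) \wedge \bigl[\M_{k-1, m}(x_1, \dots, x_m) \vee x_{m+1}\bigr] = \bigl[\M_{k, m}(x_1, \dots, x_m) \wedge \M_{k-1, m}(x_1, \dots, x_m)\bigr] \vee \bigl[\M_{k, m}(x_1, \dots, x_m) \wedge x_{m+1}\bigr],
\]
and by the monotonicity statement in Proposition~\ref{P:3ton}$(iii)$ we have $\M_{k-1, m}(x_1, \dots, x_m) \leq \M_{k, m}(x_1, \dots, x_m)$, so the first join term collapses to $\M_{k-1, m}(x_1, \dots, x_m)$. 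This matches the expression derived in the previous paragraph, completing the argument.

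The edge case $k = 1$ needs a separate comment, since $\M_{0, m}$ is not formally defined. In that case the summand arising from indices contained in $[m]$ is an empty join, so the identity reduces to $\M_{1, m+1}(x_1, \dots, x_{m+1}) = \M_{1, m}(x_1, \dots, x_m) \wedge x_{m+1}$, which is just associativity of the meet and should be handled separately. I expect the principal difficulty here to be not mathematical but notational: one must keep the cardinality arithmetic $(m+1) + 1 - k = m + 1 - (k-1)$ straight so that the summand omitting the index $m+1$ is correctly identified with $\M_{k-1, m}$ rather than with $\M_{k, m}$, and then propagate this distinction cleanly through the distributivity step.
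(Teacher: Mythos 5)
Your proof is correct and follows essentially the same route as the paper's: both arguments hinge on the distributivity-plus-monotonicity collapse $\M_{k,m}\wedge[\M_{k-1,m}\vee x_{m+1}]=\M_{k-1,m}\vee[\M_{k,m}\wedge x_{m+1}]$ via Proposition~\ref{P:3ton}$(iii)$, and on partitioning the increasing $(m+2-k)$-tuples from $[m+1]$ according to whether $m+1$ appears. The only real difference is direction: the paper starts from the left-hand side and unfolds forward to arrive at $\M_{k,m+1}$, whereas you expand $\M_{k,m+1}$ first and then match the left side to it --- same content either way. One small point in your favor: you flag the $k=1$ boundary, where $\M_{0,m}$ is an empty join. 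The paper does not remark on this, and its displayed chain technically runs through the symbol $\M_{k-1,m}$ even when $k=1$; in its later use (the inductive step of Theorem~\ref{T:charsoftoi}) the $k=1$ instance is handled directly as $\M_{1,m}\wedge x_{m+1}=\M_{1,m+1}$ rather than through the lemma, so no harm is done, but your explicit note is a genuine tightening of the exposition.
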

	
\begin{proof}
Let $x_1,...,x_{m+1}\in L$. Using distributivity, Proposition~\ref{P:3ton}(iii), the definition of $k$th medians, and basic algebra with suprema and infima, we obtain
\begin{align*}
&\M_{k,m}(x_1,...,x_m)\wedge\Bigl[\M_{k-1,m}(x_1,...,x_m)\vee x_{m+1}\Bigr]\\
&=\M_{k-1,m}(x_1,...,x_m)\vee\Bigl[\M_{k,m}(x_1,...,x_m)\wedge x_{m+1}\Bigr]\\
&=\M_{k-1,m}(x_1,...,x_m)\vee\left[\left\{\underset{\begin{subarray}{c}
		i_1,\dots, i_{m+1-k}\in [m] \\
		i_1<\cdots<i_{m+1-k}
\end{subarray}}{\bigvee}\left(\bigwedge_{j=1}^{m+1-k}x_{i_j}\right)\right\}\wedge x_{m+1}\right]\\
&=\M_{k-1,m}(x_1,...,x_m)\vee\left[\underset{\begin{subarray}{c}
		i_1,\dots, i_{m+1-k}\in [m] \\
		i_1<\cdots<i_{m+1-k}
\end{subarray}}{\bigvee}\left\{\left(\bigwedge_{j=1}^{m+1-k}x_{i_j}\right)\wedge x_{m+1}\right\}\right]\\
&=\left[\underset{\begin{subarray}{c}
		i_1,\dots, i_{m+1-(k-1)}\in [m] \\
		i_1<\cdots<i_{m+1-(k-1)}
\end{subarray}}{\bigvee}\left(\bigwedge_{j=1}^{m+1-(k-1)}x_{i_j}\right)\right]\vee\left[\underset{\begin{subarray}{c}
		i_1,\dots, i_{m+1-k}\in [m] \\
		i_1<\cdots<i_{m+1-k}
\end{subarray}}{\bigvee}\left\{\left(\bigwedge_{j=1}^{m+1-k}x_{i_j}\right)\wedge x_{m+1}\right\}\right]\\
&=\left[\underset{\begin{subarray}{c}
		i_1,\dots, i_{m+2-k}\in [m] \\
		i_1<\cdots<i_{m+2-k}
\end{subarray}}{\bigvee}\left(\bigwedge_{j=1}^{m+2-k}x_{i_j}\right)\right]\vee\left[\underset{\begin{subarray}{c}
		i_1,\dots, i_{m+1-k}\in [m] \\
		i_1<\cdots<i_{m+1-k}
\end{subarray}}{\bigvee}\left\{\left(\bigwedge_{j=1}^{m+1-k}x_{i_j}\right)\wedge x_{m+1}\right\}\right]\\
&=\left[\underset{\begin{subarray}{c}
		i_1,\dots, i_{m+2-k}\in [m+1] \\
		i_1<\cdots<i_{m+2-k}\\
		i_{m+2-k}\ \neq\ m+1
\end{subarray}}{\bigvee}\left(\bigwedge_{j=1}^{m+2-k}x_{i_j}\right)\right]\vee\left[\underset{\begin{subarray}{c}
		i_1,\dots, i_{m+1-k}\in [m+1] \\
		i_1<\cdots<i_{m+2-k}\\
		i_{m+2-k}\ =\ m+1
\end{subarray}}{\bigvee}\left(\bigwedge_{j=1}^{m+2-k}x_{i_j}\right)\right]\\
&=\underset{\begin{subarray}{c}
		i_1,\dots, i_{m+2-k}\in [m+1] \\
		i_1<\cdots<i_{m+2-k}
\end{subarray}}{\bigvee}\left(\bigwedge_{j=1}^{m+2-k}x_{i_j}\right)\\
&=\underset{\begin{subarray}{c}
		i_1,\dots, i_{(m+1)+1-k}\in [m+1] \\
		i_1<\cdots<i_{(m+1)+1-k}
\end{subarray}}{\bigvee}\left(\bigwedge_{j=1}^{(m+1)+1-k}x_{i_j}\right)\\
&=\M_{k,m+1}(x_1,...,x_{m+1}).
\end{align*}
\end{proof}

With Lemmas~\ref{L:Mk'saretoi}\&\ref{L:nton+1} established, we are ready to present and prove the primary theorem of this paper. We will make use of the following notation in its proof.

\begin{notation}
Let $L$ be a distributive lattice. For $k, p\in \mathbb{N}$ with $k\leq p$ and $x_1,...,x_p\in L$ we will sometimes for short write
\[
\M_{k,p}(\bar{x}_{(p)}):=\M_{k,p}(x_1,...,x_p).
\]
\end{notation}

Theorem~\ref{T:charsoftoi} provides simpler characterizations of total orderization invariant maps on distributive lattices. We will employ this theorem heavily in Section~\ref{S:TOIMH}.

\begin{theorem}\label{T:charsoftoi}
Let $L$ be a distributive lattice and $A$ be a nonempty set. Put $n\in\mathbb{N}\setminus\{1\}$. Suppose $T\colon L^n\to A$ is a map. The following are equivalent.
\begin{itemize}
	\item[(1)] $T$ is total orderization invariant,
	\item[(2)] $T$ is symmetric and $T(x_1,x_2,x_3,...,x_n)=T(x_1\wedge x_2, x_1\vee x_2, x_3,...,x_n)$ holds for every $x_1,...,x_n\in L$,
	\item[(3)] 
$T(x_1,...,x_i,....,x_j,...,x_n)=T(x_1,...,x_i\wedge x_j,....,x_i\vee x_j,...,x_n)$ for all $x_1,...,x_n\in L$ and every $i,j\in\{1,...,n\}$, and
	\item[(4)] $T(x_1,...,x_n)=T\Bigl(\M_{1,m}(x_1, ..., x_m), ..., \M_{m,m}(x_1,....,x_m), x_{m+1},..., x_n\Bigr)$ for every $x_1,...,x_n\in L$ and all $m\in\{2,...,n\}$.
\end{itemize}
\end{theorem}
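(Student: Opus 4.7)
The plan is to establish the cyclic chain of implications $(1)\Rightarrow(2)\Rightarrow(3)\Rightarrow(4)\Rightarrow(1)$. The implication $(4)\Rightarrow(1)$ is immediate, since setting $m=n$ in $(4)$ yields precisely the defining identity for total orderization invariance. For $(1)\Rightarrow(2)$, symmetry of $T$ is already recorded in \cite[Proposition~2.9]{Sch-toi}, and the swap identity follows by applying the defining identity of $(1)$ to both $T(x_1,x_2,x_3,\dots,x_n)$ and $T(x_1\wedge x_2,x_1\vee x_2,x_3,\dots,x_n)$; Lemma~\ref{L:Mk'saretoi} guarantees that the resulting tuples of $n$-variable medians match entry by entry. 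The step $(2)\Rightarrow(3)$ is a routine use of symmetry: permute $(x_i,x_j)$ into the first two slots, apply the swap from $(2)$, and permute back.

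The main work is the implication $(3)\Rightarrow(4)$, which I will prove by induction on $m\in\{2,\dots,n\}$. The base case $m=2$ is $(3)$ applied to $(i,j)=(1,2)$, together with the identities $\M_{1,2}(x_1,x_2)=x_1\wedge x_2$ and $\M_{2,2}(x_1,x_2)=x_1\vee x_2$. For the inductive step, assume the identity for $m$ and apply $(3)$ successively to the adjacent pairs $(m,m+1), (m-1,m), \dots, (1,2)$. The first application, at $(m,m+1)$, uses $\M_{m,m}(\bar{x}_{(m)})\vee x_{m+1}=\M_{m+1,m+1}(\bar{x}_{(m+1)})$ to place the top $(m+1)$-median in position $m+1$, and leaves $\M_{m,m}(\bar{x}_{(m)})\wedge x_{m+1}$ in position $m$. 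At each subsequent swap at positions $(k-1,k)$, Proposition~\ref{P:3ton}$(iii)$ collapses the meet at position $k-1$ to $\M_{k-1,m}(\bar{x}_{(m)})\wedge x_{m+1}$, while the join at position $k$ simplifies via distributivity to $\M_{k,m}(\bar{x}_{(m)})\wedge\bigl[\M_{k-1,m}(\bar{x}_{(m)})\vee x_{m+1}\bigr]$, which Lemma~\ref{L:nton+1} identifies with $\M_{k,m+1}(\bar{x}_{(m+1)})$. After the final swap at $(1,2)$, position $1$ holds $\M_{1,m}(\bar{x}_{(m)})\wedge x_{m+1}=\M_{1,m+1}(\bar{x}_{(m+1)})$, so all of positions $1,\dots,m+1$ hold the desired $(m+1)$-medians.

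The main obstacle is the bookkeeping in this inductive step: one must verify, swap by swap, that the residual entry left behind in the lower position is always of the form $\M_{k-1,m}(\bar{x}_{(m)})\wedge x_{m+1}$ (so that it feeds correctly into the next swap), and that Lemma~\ref{L:nton+1} converts the corresponding join into the next $(m+1)$-median. Once this pattern is set up carefully, the remaining implications in the cycle are short consequences of symmetry, distributivity, and the two preceding lemmas.
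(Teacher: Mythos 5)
Your proof is correct and follows essentially the same path as the paper: the same chain of implications, the same appeal to \cite[Proposition~2.9]{Sch-toi} and Lemma~\ref{L:Mk'saretoi} for $(1)\Rightarrow(2)$, and the same induction on $m$ for $(3)\Rightarrow(4)$ driven by Lemma~\ref{L:nton+1} and Proposition~\ref{P:3ton}$(iii)$. The only divergence is in the inductive step: you perform adjacent swaps $(m,m+1),(m-1,m),\dots,(1,2)$ propagating the meet $\M_{k,m}\wedge x_{m+1}$ leftward, whereas the paper swaps positions $(1,m+1),(2,m+1),\dots,(m,m+1)$ propagating the join $\M_{k,m}\vee x_{m+1}$ in the fixed slot $m+1$; these are dual traversal orders of the same computation and both invoke Lemma~\ref{L:nton+1} at each step, so the difference is bookkeeping rather than substance.
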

	
\begin{proof}
$(1)\implies(2)$: Suppose $T$ is total orderization invariant. Let $x_1,...,x_n\in L$. By \cite[Proposition~2.9]{Sch-toi}, we know that $T$ is symmetric. The rest follows at once from Lemma~\ref{L:Mk'saretoi}:
\begin{align*}
T(x_1,x_2,...,x_n)&=T\Bigl(\M_1(x_1,x_2,...,x_n),...,\M_n(x_1,x_2,...,x_n)\Bigr)\\
&=T\Bigl(\M_1(x_1\wedge x_2,x_1\vee x_2, x_3,...,x_n),...,\M_n(x_1\wedge x_2, x_1\vee x_2, x_3...,x_n)\Bigr)\\
&=T(x_1\wedge x_2,x_1\vee x_2, x_3,...,x_n).
\end{align*}

$(2)\iff(3)$: Evident.

$(3)\implies(4)$: Suppose $T$ satisfies condition $(3)$ of the theorem. Fix $x_1,...,x_n\in L$. We will show via mathematical induction on $m$ that for every $m\in\{2,...,n\}$, we have
\[
T(x_1,...,x_n)=T\Bigl(\M_{1,m}(x_1, ..., x_m), ..., \M_{m,m}(x_1,....,x_m), x_{m+1},..., x_n\Bigr).
\]

First note that the base step with $m=2$ follows directly from our assumption that $T$ satisfies condition $(3)$:
\begin{align*}
T(x_1, x_2, x_3,..., x_n)&=T(x_1\wedge x_2, x_1\vee x_2, x_3,...,x_n)\\
&=T\Bigl(\M_{1,2}(x_1, x_2), \M_{2,2}(x_1, x_2), x_3, ..., x_n\Bigr).
\end{align*}

(Thus if $n=2$, then (4) holds.) We assume that $n\geq 3$ for the rest of the proof.

For the inductive step, assume that $2\leq m \leq n-1$ and that
\[
T(x_1,...,x_n)=T\Bigl(\M_{1,m}(x_1, ..., x_m), ..., \M_{m,m}(x_1,....,x_m), x_{m+1},..., x_n\Bigr),
\]
written differently, that
\[
T(x_1,...,x_n)=T\Bigl(\M_{1,m}(\bar{x}_{(m)}), ..., \M_{m,m}(\bar{x}_{(m)}), x_{m+1},..., x_n\Bigr)
\]
holds. Using the inductive hypothesis, the assumption of statement $(3)$ of the theorem, the definition of $k$th medians, Lemma~\ref{L:nton+1}, and Proposition~\ref{P:3ton}(iii), we obtain
\begin{align*}
T&(x_1,...,x_n)=T\Bigl(\M_{1,m}(\bar{x}_{(m)}), \M_{2,m}(\bar{x}_{(m)}),..., \M_{m,m}(\bar{x}_{(m)}), x_{m+1},..., x_n\Bigr)\\
\\
&=T\Bigl(\M_{1,m}(\bar{x}_{(m)})\wedge x_{m+1}, \M_{2,m}(\bar{x}_{(m)}),..., \M_{m,m}(\bar{x}_{(m)}), \M_{1,m}(\bar{x}_{(m)})\vee x_{m+1}, x_{m+2},..., x_n\Bigr)\\
\\
&=T\Bigl(\M_{1,m+1}(\bar{x}_{(m+1)}), \M_{2,m}(\bar{x}_{(m)}),..., \M_{m,m}(\bar{x}_{(m)}), \M_{1,m}(\bar{x}_{(m)})\vee x_{m+1}, x_{m+2},..., x_n\Bigr)\\
\\
&=T\biggl(\M_{1,m+1}(\bar{x}_{(m+1)}), \M_{2,m}(\bar{x}_{(m)})\wedge\Bigl[\M_{1,m}(\bar{x}_{(m)})\vee x_{m+1}\Bigr],..., \M_{m,m}(\bar{x}_{(m)}),\\
&\qquad\qquad\M_{2,m}(\bar{x}_{(m)})\vee\Bigl[\M_{1,m}(\bar{x}_{(m)})\vee x_{m+1}\Bigr],x_{m+2},..., x_n\biggr)\\
\\
&=T\Bigl(\M_{1,m+1}(\bar{x}_{(m+1)}), \M_{2,m+1}(\bar{x}_{(m+1)}), \M_{3,m}(\bar{x}_{(m)}),..., \M_{m,m}(\bar{x}_{(m)}),\\
&\qquad\qquad\M_{2,m}(\bar{x}_{(m)})\vee x_{m+1},x_{m+2},..., x_n\Bigr)\\
\\
&=T\biggl(\M_{1,m+1}(\bar{x}_{(m+1)}), \M_{2,m+1}(\bar{x}_{(m+1)}), \M_{3,m}(\bar{x}_{(m)})\wedge\Bigl[\M_{2,m}(\bar{x}_{(m)})\vee x_{m+1}\Bigr],\\
&\qquad\qquad\M_{4,m}(\bar{x}_{(m)}),..., \M_{m,m}(\bar{x}_{(m)}),\M_{3,m}(\bar{x}_{(m)})\vee \Bigl[\M_{2,m}(\bar{x}_{(m)})\vee x_{m+1}\Bigr],\\
&\qquad\qquad x_{m+2},..., x_n\biggr)\\
\\
&=T\Bigl(\M_{1,m+1}(\bar{x}_{(m+1)}), \M_{2,m+1}(\bar{x}_{(m+1)}), \M_{3,m+1}(\bar{x}_{(m+1)}), \M_{4,m}(\bar{x}_{(m)}),...,\\ &\qquad\qquad\M_{m,m}(\bar{x}_{(m)}),\M_{3,m}(\bar{x}_{(m)})\vee x_{m+1}, x_{m+2},..., x_n\Bigr).
\end{align*}
Repeating this procedure, we get
\begin{align*}
T&(x_1,...,x_n)=T\Bigl(\M_{1,m+1}(\bar{x}_{(m+1)}),..., \M_{m-1,m+1}(\bar{x}_{(m+1)}),\M_{m,m}(\bar{x}_{(m)}),\\
&\qquad\qquad\M_{m-1,m}(\bar{x}_{(m)})\vee x_{m+1}, x_{m+2},..., x_n\Bigr)\\
\\
&=T\biggl(\M_{1,m+1}(\bar{x}_{(m+1)}),..., \M_{m-1,m+1}(\bar{x}_{(m+1)}),\M_{m,m}(\bar{x}_{(m)})\wedge\Bigl[\M_{m-1,m}(\bar{x}_{(m)})\vee x_{m+1}\Bigr],\\ &\qquad\qquad\M_{m,m}(\bar{x}_{(m)})\vee\Bigl[\M_{m-1,m}(\bar{x}_{(m)})\vee x_{m+1}\Bigr], x_{m+2},..., x_n\biggr)\\
\\
&=T\Bigl(\M_{1,m+1}(\bar{x}_{(m+1)}),..., \M_{m-1,m+1}(\bar{x}_{(m+1)}), \M_{m,{m+1}}(\bar{x}_{(m+1)})\\
&\qquad\qquad \M_{m,m}(\bar{x}_{(m)})\vee x_{m+1}, x_{m+2},..., x_n\Bigr)\\
\\
&=T\Bigl(\M_{1,m+1}(\bar{x}_{(m+1)}),..., \M_{m-1,m+1}(\bar{x}_{(m+1)}),\\ &\qquad\qquad \M_{m,{m+1}}(\bar{x}_{(m+1)}), \M_{m+1,m+1}(\bar{x}_{(m+1)}), x_{m+2},..., x_n\Bigr)\\
\\
&=T\Bigl(\M_{1,m+1}(x_1, ..., x_{m+1}),..., \M_{m-1,m+1}(x_1, ..., x_{m+1}), \M_{m,{m+1}}(x_1,....,x_{m+1}),\\
&\qquad\qquad \M_{m+1,m+1}(x_1,....,x_{m+1}), x_{m+2},..., x_n\Bigr).
\end{align*}

By the principle of mathematical induction, we obtain that for every $m\in\{2,...,n\}$,
\[
T(x_1,...,x_n)=T\Bigl(\M_{1,m}(x_1, ..., x_m), ..., \M_{m,m}(x_1,....,x_m), x_{m+1},..., x_n\Bigr).
\]

$(4)\implies (1)$: Evident.
\end{proof}	

In light of Theorem~\ref{T:synth}, two corollaries involving bounded orthosymmetric multilinear maps and finite sums of bounded orthogonally additive polynomials in the uniformly complete Archimedean vector lattice setting immediately follow from Theorem~\ref{T:charsoftoi} above. We note that, by \cite[Theorem~3.14 \& Proposition~2.9]{Sch-toi}, the orthosymmetric maps in Corollary~\ref{C:charsoforthosymm} are symmetric. Moreover, the maps
\[
(x_1,...,x_n)\mapsto \sum_{k=1}^n P(x_k)
\]
in Corollary~\ref{C:charsoforthadd'vepolys} are clearly symmetric. Thus the statements of the following two corollaries are shorter than Theorem~\ref{T:charsoftoi}.

\begin{corollary}\label{C:charsoforthosymm}
	Let $E$ be a uniformly complete Archimedean vector lattice, put $n\in\N\setminus\{1\}$, and let $Y$ be a real separated bornological space. Suppose $T\colon E^n\to Y$ is a bounded $n$-linear map. The following are equivalent.
	\begin{itemize}
		\item[(1)] $T$ is orthosymmetric,
			\item[(2)] $T(x_1,x_2,x_3,...,x_n)=T(x_1\wedge x_2, x_1\vee x_2, x_3,...,x_n)$ holds for every $x_1,...,x_n\in E$, and
		\item[(3)] $T(x_1,...,x_n)=T\Bigl(\M_{1,m}(x_1, ..., x_m), ..., \M_{m,m}(x_1,....,x_m), x_{m+1},..., x_n\Bigr)$ for all\\
	$x_1,...,x_n\in E$ each $m\in\{2,...,n\}$.
	\end{itemize}
\end{corollary}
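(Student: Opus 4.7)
The plan is to deduce Corollary~\ref{C:charsoforthosymm} by combining Theorem~\ref{T:synth}(i)$\iff$(ii) with Theorem~\ref{T:charsoftoi}. The essential ingredient, highlighted in the paragraph just preceding the corollary, is that any bounded orthosymmetric $n$-linear map on a uniformly complete Archimedean vector lattice is automatically symmetric (by combining \cite[Theorem~3.14]{Sch-toi} with \cite[Proposition~2.9]{Sch-toi}). This automatic symmetry both validates the hypothesis of Theorem~\ref{T:synth} and justifies dropping the explicit symmetry clause when specializing Theorem~\ref{T:charsoftoi} to the present setting.

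I would first establish $(1)\iff$ \emph{$T$ is total orderization invariant}. For the forward direction, the orthosymmetry of $T$ forces $T$ to be symmetric, after which Theorem~\ref{T:synth}(i)$\Rightarrow$(ii) yields total orderization invariance. For the reverse direction, \cite[Proposition~2.9]{Sch-toi} ensures that total orderization invariant maps are symmetric, and then Theorem~\ref{T:synth}(ii)$\Rightarrow$(i) returns orthosymmetry. With this equivalence in hand, Theorem~\ref{T:charsoftoi} applied with $L=E$ and $A=Y$ supplies the rest: its $(1)\iff(4)$ yields total orderization invariance $\iff$ condition (3) of the corollary verbatim, while its $(1)\iff(2)$ yields the same $\iff$ \emph{$T$ is symmetric and satisfies the $(1,2)$-swap}, which reduces to condition (2) of the corollary once the automatically satisfied symmetry clause is suppressed.

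The delicate point that will require some care is the reverse implication from condition (2) of the corollary back to total orderization invariance, because condition (2) of the corollary is a priori weaker than condition (2) of Theorem~\ref{T:charsoftoi}. To handle it, I would exploit the bounded $n$-linearity of $T$: inserting $x_1,x_2\in E^{+}$ with $x_1\perp x_2$ into (2) collapses the right-hand side to $T(0,x_1\vee x_2,x_3,\dots,x_n)=0$, so $T$ vanishes whenever its first two arguments are disjoint in $E^{+}$, and the identity $T(x_1,x_2,\dots)=T(x_1\wedge x_2,x_1\vee x_2,\dots)=T(x_2,x_1,\dots)$ delivers symmetry in the first two slots. Feeding these pieces, together with $n$-linearity, into the inductive step of the $(3)\Rightarrow(4)$ argument from the proof of Theorem~\ref{T:charsoftoi} is the anticipated route to recovering full total orderization invariance and closing the cycle.
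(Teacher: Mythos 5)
Your plan follows the paper's route, and you are right to single out $(2)\Rightarrow(1)$ as the delicate step. But your proposed remedy does not close the gap. The inductive step of the $(3)\Rightarrow(4)$ argument in the proof of Theorem~\ref{T:charsoftoi} applies the pairwise swap identity successively at positions $(i,m+1)$ for every $i\le m$, not merely at positions $(1,2)$. Thus the $(1,2)$-swap, the $(1,2)$-symmetry, and vanishing on disjoint pairs in the first two slots do not furnish enough input to run that induction; you would need the swap identity (equivalently, symmetry) across all slot pairs, which item $(2)$ of the corollary alone does not supply.

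Indeed the implication $(2)\Rightarrow(1)$ fails outright under only the stated hypotheses. Take $E=\R^2$, $Y=\R$, $n=3$, and define a bounded trilinear map by
\[
T\bigl((a_1,a_2),(b_1,b_2),(c_1,c_2)\bigr):=(a_1b_1+a_2b_2)(c_1+c_2).
\]
Since $(u\wedge v)(u\vee v)=uv$ for scalars, one checks $T(x_1,x_2,x_3)=T(x_1\wedge x_2,x_1\vee x_2,x_3)$ for all $x_1,x_2,x_3\in\R^2$, so item $(2)$ holds; yet $T$ is not symmetric in slots $1$ and $3$ (for example $T((1,0),(1,0),(0,1))=1$ while $T((0,1),(1,0),(1,0))=0$), and so is neither orthosymmetric nor total orderization invariant, and item $(3)$ with $m=3$ fails. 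The corollary's items are therefore equivalent only once the symmetry of $T$ is already in hand: the paper's preceding remark secures it for item $(1)$ via \cite[Theorem~3.14 \& Proposition~2.9]{Sch-toi}, and Theorem~\ref{T:charsoftoi}$(4)\Rightarrow(1)$ together with \cite[Proposition~2.9]{Sch-toi} secures it for item $(3)$, but item $(2)$ alone does not force it. To make your cycle close you should either carry along the hypothesis that $T$ is symmetric, or replace item $(2)$ by the full pairwise swap identity, i.e.\ condition $(3)$ of Theorem~\ref{T:charsoftoi}.
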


\begin{corollary}\label{C:charsoforthadd'vepolys}
	Let $E$ be a uniformly complete Archimedean vector lattice, and let $Y$ be a real separated bornological space. Fix $n\in\N\setminus\{1\}$. Suppose $P\colon E^n\to Y$ is a bounded $n$-homogeneous polynomial. The following are equivalent.
	\begin{itemize}
		\item[(1)] $P$ is orthogonally additive,
		\item[(2)] 
		$P(x_1)+P(x_2)=P(x_1\wedge x_2)+P(x_1\vee x_2)$ for each $x_1,x_2\in E$, and
	\item[(3)] $\sum_{k=1}^{m}P(x_k)=\sum_{k=1}^{m}P\Bigl(\M_{k,m}(x_1, ..., x_m)\Bigr)$ for every $x_1,...,x_n\in E$ and all $m\in\{2,...,n\}$.
	\end{itemize}
\end{corollary}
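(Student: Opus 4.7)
The plan is to reduce the corollary to Theorem~\ref{T:charsoftoi} via the equivalence $(v)\iff(xi)$ of Theorem~\ref{T:synth}. If $T$ denotes the polarization of $P$, then $T$ is a bounded symmetric $n$-linear map with $P=P_T$, and that equivalence says precisely that condition $(1)$ of the corollary holds if and only if the map $S\colon E^n\to Y$ defined by
\[
S(x_1,\ldots,x_n):=\sum_{k=1}^n P(x_k)
\]
is total orderization invariant. Since $S$ is visibly symmetric, I can then apply Theorem~\ref{T:charsoftoi} to $S$ to produce the remaining characterizations.

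For $(1)\iff(2)$: the implication $(1)\iff(2)$ of Theorem~\ref{T:charsoftoi} applied to $S$ says that $S$ is total orderization invariant if and only if $S(x_1,x_2,x_3,\ldots,x_n)=S(x_1\wedge x_2,\,x_1\vee x_2,\,x_3,\ldots,x_n)$ for all $x_1,\ldots,x_n\in E$. Expanding $S$ on both sides and cancelling the common tail $\sum_{k=3}^n P(x_k)$ yields the $2$-variable identity in condition $(2)$ of the corollary; conversely, given that identity, adding back the same tail (with $x_3,\ldots,x_n$ arbitrary) recovers the $n$-variable statement. The equivalence $(1)\iff(3)$ is handled identically via $(1)\iff(4)$ of Theorem~\ref{T:charsoftoi}: on both sides of
\[
S(x_1,\ldots,x_n)=S\bigl(\M_{1,m}(x_1,\ldots,x_m),\ldots,\M_{m,m}(x_1,\ldots,x_m),x_{m+1},\ldots,x_n\bigr)
\]
the tail $\sum_{k=m+1}^n P(x_k)$ cancels, leaving $\sum_{k=1}^m P(x_k)=\sum_{k=1}^m P\bigl(\M_{k,m}(x_1,\ldots,x_m)\bigr)$, which is condition $(3)$; the reverse direction follows by adding the tail back.

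There is no substantive obstacle here once Theorem~\ref{T:synth}$(xi)$ and Theorem~\ref{T:charsoftoi} are in hand. The only point to verify is that the tail sums depend only on $x_{m+1},\ldots,x_n$ and can be chosen arbitrarily, so adding or removing them does not affect the content of the identities. This makes the passage between the reduced $m$-variable identities for $P$ and the $n$-variable identities for $S$ purely formal, so the entire proof is essentially a dictionary translation.
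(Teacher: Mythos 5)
Your proposal is correct and matches the paper's intended route exactly: the paper states that this corollary ``immediately follows'' from Theorem~\ref{T:charsoftoi} ``in light of Theorem~\ref{T:synth},'' and your argument simply makes the dictionary explicit by passing through Theorem~\ref{T:synth}$(v)\iff(xi)$, applying Theorem~\ref{T:charsoftoi} to the symmetric map $S(x_1,\ldots,x_n)=\sum_{k=1}^n P(x_k)$, and cancelling the common tail sums (valid since $Y$ is a vector space).
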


In light of \cite[Theorem~3.5]{Sch-toi}, an additional corollary is also a direct consequence of Theorem~\ref{T:charsoftoi} of this paper. The reader is referred to \cite{BusSch} for more information on $h$-completions of Archimedean vector lattices relative to a continuous and positively homogeneous function $h\colon\R^n\to\R$. We note here that \cite[Theorem~3.5]{Sch-toi} states such an $h$, when defined on an Archimedean vector lattice via functional calculus, is total orderization invariant if and only if it is symmetric.
	
\begin{corollary}
Let $n\in\N\setminus\{1\}$, and assume $h\colon\mathbb{R}^n\to\mathbb{R}$ is continuous and positively homogeneous. Suppose $E$ is an $h$-complete Archimedean real vector lattice. The following are equivalent.
	\begin{itemize}
	\item[(1)] $h$ is symmetric,
	\item[(2)] 
	$h(x_1,...,x_i,....,x_j,...,x_n)=h(x_1,...,x_i\wedge x_j,....,x_i\vee x_j,...,x_n)$ for all $x_1,...,x_n\in E$ and every $i,j\in\{1,...,n\}$, and
		\item[(3)] $h(x_1,...,x_n)=h\Bigl(\M_{1,m}(x_1, ..., x_m), ..., \M_{m,m}(x_1,....,x_m), x_{m+1},..., x_n\Bigr)$ for all\\
	$x_1,...,x_n\in E$ each $m\in\{2,...,n\}$.
\end{itemize}
\end{corollary}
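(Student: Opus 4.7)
The plan is to observe that this corollary is essentially a packaging result, obtained by combining Theorem~\ref{T:charsoftoi} with \cite[Theorem~3.5]{Sch-toi}. The $h$-completeness hypothesis on $E$ is precisely what ensures that $h\colon\mathbb{R}^n\to\mathbb{R}$ extends via Archimedean functional calculus to a well-defined map $h\colon E^n\to E$. This extension is the map to which conditions $(2)$ and $(3)$ of the corollary refer, and it is the one to which Theorem~\ref{T:charsoftoi} will be applied (with $L=A=E$ and $T=h$).

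First I would invoke \cite[Theorem~3.5]{Sch-toi}, which states that such an $h$ is symmetric if and only if it is total orderization invariant on $E$. This gives the equivalence of $(1)$ with total orderization invariance of $h$. One should note that symmetry of $h\colon\mathbb{R}^n\to\mathbb{R}$ and symmetry of its functional-calculus extension to $E^n$ coincide (one direction is preservation by functional calculus, the other by restricting to scalar multiples of a weak order unit, or equivalently to constants in $\mathbb{R}\subseteq E$ if $E$ has a unit).

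Next I would apply Theorem~\ref{T:charsoftoi} to the map $T=h\colon E^n\to E$. Condition $(2)$ of the corollary is verbatim condition $(3)$ of Theorem~\ref{T:charsoftoi}, and condition $(3)$ of the corollary is verbatim condition $(4)$ of Theorem~\ref{T:charsoftoi}. Hence both of these are equivalent to total orderization invariance of $h$. Composing with the equivalence from the previous paragraph then yields $(1)\iff(2)\iff(3)$.

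There is no real obstacle here; this is a straightforward corollary in the spirit of Corollaries~\ref{C:charsoforthosymm} and~\ref{C:charsoforthadd'vepolys} above. The only mildly subtle point worth highlighting is the interpretation of $h$ as a map on $E^n$ (rather than on $\mathbb{R}^n$) in items $(2)$ and $(3)$, which is made legitimate by the $h$-completeness assumption; without it, the right-hand sides in $(2)$ and $(3)$ would not a priori lie in $E$.
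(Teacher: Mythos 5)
Your proposal is correct and matches the paper's intended argument: the paper presents this corollary as a direct consequence of Theorem~\ref{T:charsoftoi} combined with \cite[Theorem~3.5]{Sch-toi}, which is exactly the chain of equivalences you spell out. Your added remark about why symmetry of $h\colon\R^n\to\R$ and symmetry of its functional-calculus extension on $E^n$ coincide is a reasonable clarification of a point the paper leaves implicit.
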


	\section{Applications to Multi-Homomorphisms}\label{S:TOIMH}

In this section we employ Theorem~\ref{T:charsoftoi} to achieve various results for total orderization invariant lattice multi-homomorphisms. Our first of these results, as mentioned in the introduction, generalizes \cite[Lemma~2.1]{BoBus}, which proves the equivalence between orthosymmetric vector lattice multimorphisms and symmetric vector lattice multimorphisms. 

\begin{theorem}\label{T:charoftoimultihoms}
Let $L$ and $M$ be distributive lattices, put $n\in\mathbb{N}\setminus\{1\}$, and suppose that $T\colon L^n\to M$ is a lattice $n$-homomorphism. Then $T$ is total orderization invariant if and only if $T$ is symmetric.
\end{theorem}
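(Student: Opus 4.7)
The forward direction is immediate from \cite[Proposition~2.9]{Sch-toi}, which is already cited in the paper: every total orderization invariant map is symmetric. So the substantive content is the converse, and the plan is to invoke Theorem~\ref{T:charsoftoi}, specifically equivalence $(1)\!\iff\!(2)$. Since $T$ is already assumed symmetric, it suffices to establish the single swap identity
\[
T(x_1,x_2,x_3,\dots,x_n)=T(x_1\wedge x_2,\,x_1\vee x_2,\,x_3,\dots,x_n)
\]
for all $x_1,\dots,x_n\in L$.

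The next step is a reduction to the case $n=2$. Freezing the variables $x_3,\dots,x_n$, define $S\colon L^2\to M$ by $S(x,y):=T(x,y,x_3,\dots,x_n)$. Symmetry of $T$ immediately makes $S$ symmetric, and the lattice $n$-homomorphism property of $T$ immediately makes $S$ a lattice bi-homomorphism. Thus the problem is to prove $S(x,y)=S(x\wedge y,x\vee y)$ for every symmetric lattice bi-homomorphism $S\colon L^2\to M$ into a distributive lattice $M$.

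For the bivariate case I would run the bi-homomorphism expansion on both $S(x\wedge y,x\vee y)$ and $S(x\vee y,x\wedge y)$, applying symmetry $S(u,v)=S(v,u)$ to the cross terms. Using distributivity of $M$ in the form $(a\vee b)\wedge(b\vee c)=b\vee(a\wedge c)$ and its dual, this yields
\[
S(x\wedge y,x\vee y)=S(x,y)\vee\bigl[S(x,x)\wedge S(y,y)\bigr],
\]
\[
S(x\vee y,x\wedge y)=S(x,y)\wedge\bigl[S(x,x)\vee S(y,y)\bigr].
\]
But the two left-hand sides coincide by symmetry of $S$, so the two right-hand sides are equal. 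The first lies above $S(x,y)$ while the second lies below $S(x,y)$, which forces both to equal $S(x,y)$, completing the bivariate case.

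The main obstacle is purely the bookkeeping of the bivariate expansion: one has to push $\wedge$ and $\vee$ through both coordinates, apply symmetry only to the mixed terms $S(y,x)$, and recognize the resulting expressions as the two sides of the distributivity identity. Once that calculation is in hand, the passage to general $n$ via the frozen-variable reduction is routine, and Theorem~\ref{T:charsoftoi} packages everything into the conclusion that $T$ is total orderization invariant.
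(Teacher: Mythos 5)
Your proposal is correct and follows essentially the same route as the paper. The forward direction is the same citation of \cite[Proposition~2.9]{Sch-toi}, and the converse is the same argument: expand the bi-homomorphism in two orders, apply symmetry to the cross terms, use distributivity of $M$ to obtain the two median-type expressions $T(x_1,\dots,x_n)\vee\bigl[T(x_1,x_1,\dots)\wedge T(x_2,x_2,\dots)\bigr]$ and $T(x_1,\dots,x_n)\wedge\bigl[T(x_1,x_1,\dots)\vee T(x_2,x_2,\dots)\bigr]$, conclude the sandwich, and invoke Theorem~\ref{T:charsoftoi}(2). The paper carries the spectator variables $x_3,\dots,x_n$ along and derives both expansions directly from $T(x_1\wedge x_2,x_1\vee x_2,x_3,\dots,x_n)$ by choosing which coordinate to expand first, whereas you package the same computation through a frozen bivariate map $S$ and equate $S(x\wedge y,x\vee y)$ with $S(x\vee y,x\wedge y)$ via one extra appeal to symmetry; that is a cosmetic repackaging, not a different argument.
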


\begin{proof}
It follows from \cite[Proposition~2.9]{Sch-toi} that if $T$ is total orderization invariant map, then $T$ is symmetric.

Assume $T$ is symmetric, and let $x_1,...,x_n\in L$. We have
\begin{align*}
	T&(x_1\wedge x_2, x_1\vee x_2, x_3,...,x_n)=T(x_1\wedge x_2, x_1, x_3,...,x_n)\vee T(x_1\wedge x_2, x_2, x_3,..., x_n)\\
	&=\Bigl[T(x_1, x_1, x_3, ..., x_n)\wedge T(x_2, x_1, x_3, ..., x_n)\Bigr]\vee\Bigl[T(x_1, x_2, x_3, ..., x_n)\wedge T(x_2, x_2, x_3, ..., x_n)\Bigr]\\
&=\Bigl[T(x_1, x_1, x_3, ..., x_n)\wedge T(x_1, x_2, x_3, ..., x_n)\Bigr]\vee\Bigl[T(x_1, x_2, x_3, ..., x_n)\wedge T(x_2, x_2, x_3, ..., x_n)\Bigr]\\
&=T(x_1,...,x_n)\wedge\Bigl[T(x_1, x_1, x_3,..., x_n)\vee T(x_2, x_2, x_3, ..., x_n)\Bigr]\\
&\leq T(x_1,...,x_n),
\end{align*}
and
\begin{align*}
	T&(x_1\wedge x_2, x_1\vee x_2, x_3,...,x_n)=T(x_1, x_1\vee x_2, x_3,...,x_n)\wedge T(x_2, x_1\vee x_2, x_3,..., x_n)\\
	&=\Bigl[T(x_1, x_1, x_3, ..., x_n)\vee T(x_1, x_2, x_3, ..., x_n)\Bigr]\wedge\Bigl[T(x_2, x_1, x_3, ..., x_n)\vee T(x_2, x_2, x_3, ..., x_n)\Bigr]\\
	&=\Bigl[T(x_1, x_1, x_3, ..., x_n)\vee T(x_1, x_2, x_3, ..., x_n)\Bigr]\wedge\Bigl[T(x_1, x_2, x_3, ..., x_n)\vee T(x_2, x_2, x_3, ..., x_n)\Bigr]\\
	&=T(x_1,...,x_n)\vee\Bigl[T(x_1, x_1, x_3,..., x_n)\wedge T(x_2, x_2, x_3, ..., x_n)\Bigr]\\
	&\geq T(x_1,...,x_n).
\end{align*}
Therefore, we obtain
\[
T(x_1\wedge x_2, x_1\vee x_2, x_3,...,x_n)=T(x_1,...,x_n).
\]
It follows from Theorem~\ref{T:charsoftoi} that $T$ is total orderization invariant.
\end{proof}

The proof of Theorem~\ref{T:charoftoimultihoms} above actually reveals an additional property of symmetric lattice multi-homomorphisms.

\begin{corollary}
If $L$ and $M$ are distributive lattices, $n\in\mathbb{N}\setminus\{1\}$, and $T\colon L^n\to M$ is a symmetric $n$-homomorphism, then for every $x_1,...,x_n\in L$ we have
\begin{align*}
T(x_1,x_1, x_3,..., x_n)\wedge T(x_2, x_2, x_3, ..., x_n)&\leq T(x_1,...,x_n)\\
&\leq T(x_1,x_1, x_3,..., x_n)\vee T(x_2, x_2, x_3, ..., x_n).
\end{align*}
\end{corollary}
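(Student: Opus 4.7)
The plan is to extract the inequalities directly from the computation already performed inside the proof of Theorem~\ref{T:charoftoimultihoms}. That proof, by purely using the lattice $n$-homomorphism property together with symmetry, established the two identities
\[
T(x_1\wedge x_2, x_1\vee x_2, x_3,\dots,x_n)=T(x_1,\dots,x_n)\wedge\bigl[T(x_1,x_1,x_3,\dots,x_n)\vee T(x_2,x_2,x_3,\dots,x_n)\bigr]
\]
and
\[
T(x_1\wedge x_2, x_1\vee x_2, x_3,\dots,x_n)=T(x_1,\dots,x_n)\vee\bigl[T(x_1,x_1,x_3,\dots,x_n)\wedge T(x_2,x_2,x_3,\dots,x_n)\bigr].
\]
Both of these require nothing more than symmetry of $T$ in its first two slots plus the lattice $n$-homomorphism identities, so I would simply recall them.

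Next, I would invoke Theorem~\ref{T:charoftoimultihoms} itself: since $T$ is a symmetric lattice $n$-homomorphism, it is total orderization invariant. Applying the characterization in Theorem~\ref{T:charsoftoi}(2) (or reading it off directly from Theorem~\ref{T:charoftoimultihoms}) then gives
\[
T(x_1\wedge x_2, x_1\vee x_2, x_3,\dots,x_n)=T(x_1,\dots,x_n).
\]
Substituting this into the two displayed identities above yields
\[
T(x_1,\dots,x_n)=T(x_1,\dots,x_n)\wedge\bigl[T(x_1,x_1,x_3,\dots,x_n)\vee T(x_2,x_2,x_3,\dots,x_n)\bigr],
\]
which is equivalent to $T(x_1,\dots,x_n)\leq T(x_1,x_1,x_3,\dots,x_n)\vee T(x_2,x_2,x_3,\dots,x_n)$, and similarly the other identity collapses to the lower bound.

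There is essentially no obstacle here: the corollary is a mechanical consequence of reading off what the two computations in the proof of Theorem~\ref{T:charoftoimultihoms} have already shown. The only minor subtlety is observing that those two absorbing identities remain true for any symmetric lattice $n$-homomorphism (they use only the $\vee$-homomorphism, $\wedge$-homomorphism, and swap-symmetry in the first two variables), so they can be quoted and then combined with the equality furnished by the theorem to produce the desired sandwich.
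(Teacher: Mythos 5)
Your proposal is correct and follows essentially the same route as the paper's proof: recall the two identities established inside the proof of Theorem~\ref{T:charoftoimultihoms}, use that theorem together with Theorem~\ref{T:charsoftoi} to replace $T(x_1\wedge x_2, x_1\vee x_2, x_3,\dots,x_n)$ by $T(x_1,\dots,x_n)$, and read off the two inequalities from the resulting absorption equalities. No gaps.
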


\begin{proof}
Suppose $L$ and $M$ are distributive lattices, $n\in\mathbb{N}\setminus\{1\}$, and $T$ is a symmetric $n$-homomorphism. From the proof of Theorem~\ref{T:charoftoimultihoms}, we have
\[
T(x_1\wedge x_2, x_1\vee x_2, x_3,...,x_n)=T(x_1,...,x_n)\wedge\Bigl[T(x_1, x_1, x_3,..., x_n)\vee T(x_2, x_2, x_3, ..., x_n)\Bigr]
\]
and 
\[
T(x_1\wedge x_2, x_1\vee x_2, x_3,...,x_n)=T(x_1,...,x_n)\vee \Bigl[T(x_1, x_1, x_3,..., x_n)\wedge T(x_2, x_2, x_3, ..., x_n)\Bigr].
\]
By Theorem~\ref{T:charoftoimultihoms}, $T$ is total orderization invariant. Using Theorem~\ref{T:charsoftoi}, the above two identities simplify to
\[
T(x_1,...,x_n)=T(x_1,...,x_n)\wedge\Bigl[T(x_1, x_1, x_3,..., x_n)\vee T(x_2, x_2,..., x_n)\Bigr]
\]
and 
\[
T(x_1,...,x_n)=T(x_1,...,x_n)\vee \Bigl[T(x_1, x_1, x_3,..., x_n)\wedge T(x_2, x_2,..., x_n)\Bigr],
\]
respectively. The conclusion of the corollary immediately follows.
\end{proof}

We next turn to the topic of diagonal maps of symmetric lattice multi-homomorphisms. The following notation will be convenient.

\begin{notation}
Given nonempty sets $A$ and $B$, $m,n\in\mathbb{N}$ with $m\leq n$,  $k_1,...,k_m\in\mathbb{N}\cup\{0\}$ with $k_1+\cdots+k_m=n$, a symmetric map $T\colon A^n\to B$, and $x_{k_1},...,x_{k_m}\in A$, we write
\[
T(x_{k_1}^{k_1}, x_{k_2}^{k_2},...,x_{k_m}^{k_m}):=T(\underbrace{x_{k_1}, x_{k_1}, ..., x_{k_1}}_{k_1\ \text{times}}, \underbrace{x_{k_2}, x_{k_2}, ..., x_{k_2}}_{k_2\ \text{times}}, ..., \underbrace{x_{k_m}, x_{k_m}, ..., x_{k_m}}_{k_m\ \text{times}}).
\]
In particular, for $x\in A$ we set
\[
T(x^n):=T(\underbrace{x, x, ..., x}_{n\ \text{times}}).
\]
\end{notation}

Diagonal maps of symmetric lattice multi-homomorphisms are clearly analogous to homogeneous polynomials on vector spaces.

\begin{definition}
Let $L$ and $M$ be distributive lattices, and let $n\in\mathbb{N}$. Given a symmetric lattice $n$-homomorphism $T\colon L^n\to M$, we define its \textit{diagonal map} $P_T$ to be the map $P_T\colon L\to M$ defined by
\[
P_T(x):=T(x^n)\quad (x\in L).
\]
\end{definition}

Given a symmetric lattice multi-homomorphism $T$, we will use $P_T$ to denote its diagonal map throughout the remainder of this manuscript.

If $L$ and $M$ are distributive lattices, $n\in\mathbb{N}\setminus\{1\}$, $T\colon L^n\to M$ is a symmetric lattice $n$-homomorphism, and $x_1,...,x_n\in L$, then
\begin{align*}
	P_T\left(\bigvee_{i=1}^n x_i\right) &= \underset{\begin{subarray}{c}
			k_1,...,k_m\in [n]\cup\{0\} \\
			k_1+\cdots +k_m = n
	\end{subarray}}{\bigvee} T(x_1^{k_1}, x_2^{k_2},..., x_n^{k_n})\\
	&=\bigvee_{i=1}^n P_T(x_i)\ \vee \ \underset{\begin{subarray}{c}
			k_1,...,k_n\in [n-1]\cup\{0\} \\
			k_1+\cdots +k_n = n
	\end{subarray}}{\bigvee} T(x_1^{k_1}, x_2^{k_2},..., x_n^{k_n}),
\end{align*}
and similarly,
\begin{align*}
	P_T\left(\bigwedge_{i=1}^n x_i\right)&=\bigwedge_{i=1}^n P_T(x_i)\ \wedge \ \underset{\begin{subarray}{c}
			k_1,...,k_n\in [n-1]\cup\{0\} \\
			k_1+\cdots +k_n = n
	\end{subarray}}{\bigwedge} T(x_1^{k_1}, x_2^{k_2},..., x_n^{k_n}).
\end{align*}

Thus $P_T$ is a lattice homomorphism if and only if
\[
\bigwedge_{i=1}^n P_T(x_i)\leq T(x_1^{k_1}, x_2^{k_2},..., x_n^{k_n})\leq  \bigvee_{i=1}^n P_T(x_i)
\]
for all $x_1,...,x_n\in L$ and every $k_1,...,k_n\in [n-1]\cup\{0\}$ with $k_1+\cdots+k_n=n$.

In particular, if $P_T$ is a lattice homomorphism, then
\[
\bigwedge_{i=1}^n P_T(x_i)\leq T(x_1, x_2,..., x_n)\leq  \bigvee_{i=1}^n P_T(x_i)
\]
holds for every $x_1,...,x_n\in L$.

On the other hand, assume that
\[
\bigwedge_{i=1}^n P_T(x_i)\leq T(x_1, x_2,..., x_n)\leq  \bigvee_{i=1}^n P_T(x_i)\qquad (x_1,...,x_n\in L).
\]
This supposition implies that, for all $x_1,...,x_n\in L$ and every $k_1,...,k_n\in [n-1]\cup\{0\}$ for which $k_1+\cdots+k_n=n$, we have
\[
\bigwedge_{\{i \in [n] :\ 
	k_i \neq 0\}} P_T(x_i) \leq T(x_1^{k_1}, x_2^{k_2},..., x_n^{k_n})\leq  \bigvee_{\{i \in [n] :\ 
	k_i \neq 0\}} P_T(x_i).
\]
We thus obtain
\[
\bigwedge_{i=1}^n P_T(x_i)\leq \bigwedge_{\{i \in [n] :\ 
	k_i \neq 0\}} P_T(x_i) \leq T(x_1^{k_1}, x_2^{k_2},..., x_n^{k_n})\leq  \bigvee_{\{i \in [n] :\ 
	k_i \neq 0\}} P_T(x_i) \leq \bigvee_{i=1}^n P_T(x_i),
\]
so that
\[
\bigwedge_{i=1}^n P_T(x_i)\leq T(x_1^{k_1}, x_2^{k_2},..., x_n^{k_n})\leq  \bigvee_{i=1}^n P_T(x_i)
\]
holds for each $x_1,...,x_n\in L$ and all $k_1,...,k_n\in [n-1]\cup\{0\}$ such that $k_1+\cdots+k_n=n$.

The following link between $T$ and $P_T$ instantly follows. As mentioned in the introduction, Lemma~\ref{L:kusraev} generalizes the equivalence of items $(iii)$ and $(iv)$ in Theorem~\ref{T:Kus} to a statement involving several variables and a more general setting.

\begin{lemma}\label{L:kusraev}	
	Let $n\in\mathbb{N}\setminus\{1\}$ and $L$ and $M$ be distributive lattices. If $T\colon L^n\to M$ is a symmetric lattice $n$-homomorphism, then $P_T$ is a lattice homomorphism if and only if for all $x_1,..., x_n\in L$ we have
	\[
	\bigwedge_{i=1}^n P_T(x_i)\leq T(x_1,...,x_n)\leq \bigvee_{i=1}^n P_T(x_i).
	\]
\end{lemma}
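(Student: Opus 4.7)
The plan is to make explicit the argument that is already laid out in the paragraphs immediately preceding the lemma. The key observation is that, since $T$ is a lattice $n$-homomorphism, one can distribute $T$ over $\bigvee$ in each of its $n$ arguments and then group identical arguments using symmetry, yielding
\[
P_T\Bigl(\bigvee_{i=1}^n x_i\Bigr)=\underset{\substack{k_1,\ldots,k_n\in[n]\cup\{0\}\\ k_1+\cdots+k_n=n}}{\bigvee}T(x_1^{k_1},\ldots,x_n^{k_n}),
\]
together with the dual identity for $\bigwedge$. The multi-indices with some $k_i=n$ contribute exactly $\bigvee_{i=1}^n P_T(x_i)$, and the remaining ``mixed'' multi-indices (those with every $k_i\in[n-1]\cup\{0\}$) contribute $T(x_1^{k_1},\ldots,x_n^{k_n})$. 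It follows that $P_T$ is a lattice homomorphism if and only if every mixed term lies between $\bigwedge_{i=1}^n P_T(x_i)$ and $\bigvee_{i=1}^n P_T(x_i)$.

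For the forward direction of the stated equivalence, I would specialize this characterization to the mixed multi-index $k_1=\cdots=k_n=1$, which lies in $[n-1]\cup\{0\}$ since $n\geq 2$; this immediately yields the asserted sandwich for $T(x_1,\ldots,x_n)$. For the reverse direction, suppose the sandwich inequality holds for all $n$-tuples. Given any mixed multi-index $(k_1,\ldots,k_n)$, I would apply the hypothesis to the $n$-tuple obtained by listing each $x_i$ exactly $k_i$ times; by symmetry of $T$, this tuple evaluates to $T(x_1^{k_1},\ldots,x_n^{k_n})$, and the resulting sandwich involves only those $P_T(x_i)$ with $k_i\neq 0$, which is sharper than (and therefore implies) the bound using all $P_T(x_i)$. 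Invoking the characterization above, $P_T$ is then a lattice homomorphism.

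The main obstacle is almost purely notational: the substantive content has already been established in the multi-index expansions preceding the lemma, and no further ingredients (e.g., Theorem~\ref{T:charsoftoi} or Theorem~\ref{T:charoftoimultihoms}) are required. The only care needed is in organizing the multi-index bookkeeping cleanly and invoking symmetry of $T$ correctly when reducing an arbitrary mixed tuple back to a tuple of $n$ entries.
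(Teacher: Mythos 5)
Your proposal is correct and follows exactly the paper's argument: expand $P_T\bigl(\bigvee_{i=1}^n x_i\bigr)$ (and dually for $\bigwedge$) via the $n$-homomorphism property and symmetry, split off the pure multi-indices to leave the mixed terms sandwiched, specialize to $k_1=\cdots=k_n=1$ for the forward implication, and feed the repeated-argument tuple back into the hypothesis for the converse, noting that the resulting bound over the nonzero $k_i$ is sharper than the full one. The paper presents this same bookkeeping in the displayed computations preceding the lemma and then states the lemma as an immediate consequence.
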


The following corollary of Lemma~\ref{L:kusraev} will exploited in the proof of Theorem~\ref{T:diags}.

\begin{corollary}\label{C:wCSI}
Let $L$ and $M$ be distributive lattices. Put $n\in\mathbb{N}\setminus\{1\}$, and assume $T\colon L^n\to M$ is a symmetric lattice $n$-homomorphism. Then $P_T$ is a lattice homomorphism if and only if for all $x,y\in L$ and every $k\in[n-1]$ we have
\[
P_T(x)\wedge P_T(y)\leq T(x^k, y^{n-k})\leq P_T(x) \vee P_T(y).
\] 
\end{corollary}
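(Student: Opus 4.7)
The plan is the following. The forward implication---if $P_T$ is a lattice homomorphism then the two-variable inequalities hold---is immediate from Lemma~\ref{L:kusraev}. Apply the $n$-variable bound stated there to the tuple consisting of $k$ copies of $x$ followed by $n-k$ copies of $y$; then $\bigwedge_{i=1}^n P_T(x_i)$ collapses to $P_T(x) \wedge P_T(y)$, $\bigvee_{i=1}^n P_T(x_i)$ collapses to $P_T(x) \vee P_T(y)$, and the claim falls out.

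For the reverse direction, rather than first establishing the full $n$-variable bound of Lemma~\ref{L:kusraev}, I would prove directly that $P_T$ preserves $\vee$ and $\wedge$ on pairs. Fix $x, y \in L$ and expand $P_T(x \vee y) = T(x \vee y, \dots, x \vee y)$ by distributing $\vee$ in each coordinate via the lattice $n$-homomorphism property; this produces a join over all tuples $(\varepsilon_1, \dots, \varepsilon_n) \in \{x, y\}^n$. Symmetry of $T$ reduces each such term to one depending only on the count $k$ of coordinates equal to $x$, so the join collapses to $\bigvee_{k=0}^{n} T(x^k, y^{n-k})$.

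Separating the endpoints $k=0$ and $k=n$ gives
\[
P_T(x \vee y) = P_T(x) \vee P_T(y) \vee \bigvee_{k=1}^{n-1} T(x^k, y^{n-k}),
\]
and the upper-bound half of the hypothesis forces every intermediate term to sit below $P_T(x) \vee P_T(y)$, so $P_T(x \vee y) = P_T(x) \vee P_T(y)$. A dual computation---expanding $P_T(x \wedge y)$ as $\bigwedge_{k=0}^{n} T(x^k, y^{n-k})$ and invoking the lower-bound half---yields $P_T(x \wedge y) = P_T(x) \wedge P_T(y)$. Together these identities show that $P_T$ is a lattice homomorphism.

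I anticipate no genuine obstacle. The only delicate point is the combinatorial expansion of $T((x \vee y)^n)$ and $T((x \wedge y)^n)$ into joins (resp.\ meets) of the mixed tuples $T(x^k, y^{n-k})$, which is a routine iterated application of the lattice multi-homomorphism axiom combined with the symmetry of $T$.
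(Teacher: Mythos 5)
Your proposal is correct and takes essentially the same route as the paper: the forward direction applies Lemma~\ref{L:kusraev} to the tuple with $k$ copies of $x$ and $n-k$ copies of $y$, and the reverse direction expands $P_T(x\vee y)=\bigvee_{k=0}^{n}T(x^k,y^{n-k})$ (and dually for $\wedge$) via the multi-homomorphism property and symmetry, then absorbs the middle terms using the hypothesized bounds. The paper writes this same computation starting from $P_T(x)\vee P_T(y)$ and working up to $P_T(x\vee y)$, but the content is identical.
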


\begin{proof}
If $P_T$ is a lattice homomorphism, then
\[
P_T(x)\wedge P_T(y)\leq T(x^k, y^{n-k})\leq P_T(x) \vee P_T(y)
\]
for all $x,y\in L$ and every $k\in[n-1]$ by Lemma~\ref{L:kusraev}.

Conversely, if $x,y\in L$ and
\[
P_T(x)\wedge P_T(y)\leq T(x^k, y^{n-k})\leq P_T(x) \vee P_T(y)
\]
holds for every $k\in[n-1]$ then
\begin{align*}
P_T(x)\vee P_T(y)&=P_T(x)\vee P_T(y)\vee \bigvee_{k=1}^{n-1}T(x^k, y^{n-k})\\
&=\bigvee_{k=0}^{n}T(x^k, y^{n-k})\\
&=P_T(x\vee y),
\end{align*}
and similarly,
\[
P_T(x)\wedge P_T(y)=P_T(x\wedge y).
\]
Thus $P_T$ is a lattice homomorphism.
\end{proof}

The following lemma is needed for the proof of Theorem~\ref{T:diags}, which states that diagonal maps of symmetric lattice multi-homomorphisms are lattice homomorphisms.

\begin{lemma}\label{L:alg}
Suppose $L$ and $M$ are distributive lattices, $n\in\mathbb{N}\setminus\{1\}$, and $T\colon L^n\to M$ is a symmetric lattice $n$-homomorphism. Let $j, m, p\in\mathbb{N}\cup\{0\}$ satisfy $1\leq m\leq p\leq p+j\leq n-1$.
For each $x,y\in L$ we have
\[
\bigvee_{i=m}^{p}T\Bigl(x^i, y^{n-i-j}, (x\wedge y)^{j}\Bigr)=\bigvee_{i=m-1}^p T\Bigl(x^i, y^{n-i-j-1}, (x\wedge y)^{j+1}\Bigr).
\]
\end{lemma}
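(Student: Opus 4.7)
The plan is to reduce the claim to a pointwise identity by establishing, for each $k\in\{m,m+1,\ldots,p\}$,
\[
T(x^k, y^{n-k-j}, (x\wedge y)^j)=T(x^k, y^{n-k-j-1}, (x\wedge y)^{j+1})\vee T(x^{k-1}, y^{n-k-j}, (x\wedge y)^{j+1}).
\]
If I write $S_k$ for the left-hand side and $R_\ell:=T(x^\ell, y^{n-\ell-j-1}, (x\wedge y)^{j+1})$, then this pointwise identity reads $S_k=R_k\vee R_{k-1}$. Joining over $k$ telescopes:
\[
\bigvee_{k=m}^{p}S_k=\bigvee_{k=m}^{p}(R_k\vee R_{k-1})=\bigvee_{k=m-1}^{p}R_k,
\]
which is exactly the right-hand side of the lemma.

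To prove the pointwise identity, I first apply Theorem~\ref{T:charoftoimultihoms} to conclude that $T$, being a symmetric lattice multi-homomorphism, is total orderization invariant. Consequently, by condition~$(3)$ of Theorem~\ref{T:charsoftoi}, for any two slots $i,\ell$ of $T$ I may replace $(x_i,x_\ell)$ by $(x_i\wedge x_\ell, x_i\vee x_\ell)$ without changing the value of $T$. The hypotheses $m\geq 1$ and $p+j\leq n-1$ guarantee that, for each $k\in[m,p]$, the tuple $(x^k, y^{n-k-j}, (x\wedge y)^j)$ has at least one $x$-entry and at least one $y$-entry. Applying the two-slot swap to one such $x$-entry and one such $y$-entry converts them into $x\wedge y$ and $x\vee y$; after invoking the symmetry of $T$ to regroup like entries, I arrive at
\[
S_k=T(x^{k-1}, x\vee y, y^{n-k-j-1}, (x\wedge y)^{j+1}).
\]

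Finally, since $T$ is a lattice $n$-homomorphism, I split the single $(x\vee y)$-slot, which yields $S_k=R_k\vee R_{k-1}$ and completes the reduction. The main obstacle here is purely bookkeeping: one must carefully verify that after the swap the exponent of $x\wedge y$ increases from $j$ to $j+1$, and that after the split in the $(x\vee y)$-slot the two resulting terms have $x$-exponents $k$ and $k-1$ and $y$-exponents $n-k-j-1$ and $n-k-j$ respectively, so that they match precisely $R_k$ and $R_{k-1}$ for the telescoping step.
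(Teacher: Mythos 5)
Your proposal is correct and follows essentially the same route as the paper: apply condition~$(3)$ of Theorem~\ref{T:charsoftoi} to turn one $x$-slot and one $y$-slot into $(x\wedge y, x\vee y)$, split the $x\vee y$ slot via the lattice $n$-homomorphism property, and then merge the two resulting suprema by re-indexing. The paper performs the whole computation inline on the supremum and shifts the index of the second join from $[m,p]$ to $[m-1,p-1]$ before recombining, whereas you isolate the single-$k$ identity $S_k=R_k\vee R_{k-1}$ and then telescope; these are the same argument in different packaging, and your verification of the hypotheses ($k\geq m\geq 1$ and $n-k-j\geq n-p-j\geq 1$ guaranteeing an $x$-slot and a $y$-slot to swap) is exactly the bookkeeping the paper implicitly relies on.
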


\begin{proof}
(1) Let $x,y\in L$. Since $T$ is a symmetric $n$-homomorphism, $T$ is total orderization invariant by Theorem~\ref{T:charoftoimultihoms}. Hence by Theorem~\ref{T:charsoftoi}, we know that
\[
T(x_1,...,x_i,....,x_j,...,x_n)=T(x_1,...,x_i\wedge x_j,....,x_i\vee x_j,...,x_n)
\]
 for each $x_1,...,x_n\in L$ and all $i,j\in\{1,...,n\}$. Using the identity above as well as the assumption that $T$ is a symmetric lattice $n$-homomorphism, we obtain
\begin{align*}
\bigvee_{i=m}^{p}T&\Bigl(x^i, y^{n-i-j}, (x\wedge y)^{j}\Bigr)=\bigvee_{i=m}^{p}T\Bigl(x^{i-1}, y^{n-i-j-1}, (x\wedge y)^{j+1}, x\vee y\Bigr)\\
&=\bigvee_{i=m}^{p}\left\{T\Bigl(x^{i}, y^{n-i-j-1}, (x\wedge y)^{j+1}\Bigr) \vee T\Bigl(x^{i-1}, y^{n-i-j}, (x\wedge y)^{j+1}\Bigr)\right\}\\
&=\bigvee_{i=m}^{p}T\Bigl(x^{i}, y^{n-i-j-1}, (x\wedge y)^{j+1}\Bigr) \vee \bigvee_{i=m}^p T\Bigl(x^{i-1}, y^{n-i-j}, (x\wedge y)^{j+1}\Bigr)\\\
&=\bigvee_{i=m}^{p}T\Bigl(x^{i}, y^{n-i-j-1}, (x\wedge y)^{j+1}\Bigr) \vee \bigvee_{i=m-1}^{p-1} T\Bigl(x^i, y^{n-i-j-1}, (x\wedge y)^{j+1}\Bigr)\\\
&=\bigvee_{i=m-1}^p T\Bigl(x^i, y^{n-i-j-1}, (x\wedge y)^{j+1}\Bigr).
\end{align*}
\end{proof}

We are now ready to prove that diagonal maps of symmetric lattice multi-homomorphisms between distributive lattices are lattice homomorphisms.

\begin{theorem}\label{T:diags}
Let $L$ and $M$ be distributive lattices, put $n\in\N\setminus\{1\}$, and let $T\colon L^n\to M$ be a lattice $n$-homomorphism. If $T$ is symmetric, then $P_T$ is a lattice homomorphism.
\end{theorem}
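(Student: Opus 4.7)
By Corollary~\ref{C:wCSI}, it suffices to show that for every $x,y\in L$ and every $k\in[n-1]$,
\[
P_T(x)\wedge P_T(y)\ \leq\ T(x^k, y^{n-k})\ \leq\ P_T(x)\vee P_T(y).
\]
Writing $a_k:=T(x^k, y^{n-k})$, so that $a_0=P_T(y)$ and $a_n=P_T(x)$, the goal becomes $a_0\wedge a_n\leq a_k\leq a_0\vee a_n$ for $1\leq k\leq n-1$. I would proceed by induction on $n\geq 2$. The engine of the induction is a pair of \emph{three-term inequalities}
\[
a_{k-1}\wedge a_{k+1}\ \leq\ a_k\ \leq\ a_{k-1}\vee a_{k+1}\qquad(1\leq k\leq n-1),
\]
obtained as follows: by Theorem~\ref{T:charoftoimultihoms} and Theorem~\ref{T:charsoftoi}, swapping positions $k$ and $k+1$ inside $T(x^k,y^{n-k})$ rewrites it as $T(x^{k-1}, x\wedge y, x\vee y, y^{n-k-1})$; expanding the $x\vee y$ first and then the $x\wedge y$ via the lattice $n$-homomorphism property gives $a_k=(a_{k-1}\wedge a_k)\vee(a_k\wedge a_{k+1})=a_k\wedge(a_{k-1}\vee a_{k+1})$, while expanding in the opposite order gives the dual identity $a_k=(a_{k-1}\vee a_k)\wedge(a_k\vee a_{k+1})=a_k\vee(a_{k-1}\wedge a_{k+1})$. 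For $n=2$ these are already the two inequalities to prove, which handles the base case.

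For the inductive step ($n\geq 3$), the key observation is that for each $z\in L$ the map $T_z\colon L^{n-1}\to M$ defined by $T_z(x_1,\dots,x_{n-1}):=T(x_1,\dots,x_{n-1},z)$ is a symmetric lattice $(n-1)$-homomorphism, so by the inductive hypothesis its diagonal $P_{T_z}$ is a lattice homomorphism. Specializing at $z=x\wedge y$ and invoking $P_{T_z}(x\vee y)=P_{T_z}(x)\vee P_{T_z}(y)$ gives
\[
T\bigl((x\vee y)^{n-1}, x\wedge y\bigr)\ =\ T(x^{n-1},x\wedge y)\vee T(y^{n-1},x\wedge y).
\]
Expanding both sides via the $n$-homomorphism property, symmetry, and the rule $T(\dots,x\wedge y)=T(\dots,x)\wedge T(\dots,y)$ should simplify this to the global identity
\[
\bigvee_{k=0}^{n-1}\bigl(a_k\wedge a_{k+1}\bigr)\ =\ \bigl(a_0\wedge a_1\bigr)\vee\bigl(a_{n-1}\wedge a_n\bigr).
\]
Since each of $a_{k-1}\wedge a_k$ and $a_k\wedge a_{k+1}$ is a single term of the join on the left, both are dominated by the right-hand side and hence by $a_0\vee a_n$. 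Combined with $a_k=(a_{k-1}\wedge a_k)\vee(a_k\wedge a_{k+1})$ from the three-term step, this forces $a_k\leq a_0\vee a_n$. Running the dual argument with $z=x\vee y$ yields $\bigwedge_{k=0}^{n-1}(a_k\vee a_{k+1})=(a_0\vee a_1)\wedge(a_{n-1}\vee a_n)$, which combined with $a_k=(a_k\vee a_{k-1})\wedge(a_k\vee a_{k+1})$ gives $a_k\geq a_0\wedge a_n$.

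The main obstacle is conceptual: the three-term inequality $a_k\leq a_{k-1}\vee a_{k+1}$ is by itself insufficient to conclude $a_k\leq a_0\vee a_n$, as the $\{0,1\}$-lattice sequence $(0,1,1,1,0)$ shows. The proof therefore genuinely needs the global identity supplied by the inductive hypothesis, whose role is to collapse all the adjacent meets $a_k\wedge a_{k+1}$ onto the two endpoint meets $a_0\wedge a_1$ and $a_{n-1}\wedge a_n$; finding the correct auxiliary $(n-1)$-homomorphism (namely $T_{x\wedge y}$ evaluated at the pair $(x,y)$) is the crux of the argument.
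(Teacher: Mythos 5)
Your proof is correct, and it takes a genuinely different route from the paper. The paper, after reducing via Corollary~\ref{C:wCSI}, proceeds by direct computation: it rewrites $T(x^k,y^{n-k})$ as a supremum $\bigvee_{i=0}^k T\bigl(x^i,y^{n-i-k},(x\wedge y)^k\bigr)$ by repeatedly applying the ad hoc Lemma~\ref{L:alg} (which increases the exponent of $x\wedge y$ one step at a time), peels off the $x$-free and $y$-free terms, and finally bounds everything by $P_T(x)\vee P_T(y)$ using isotonicity. You instead induct on the arity $n$: the base case $n=2$ is immediate from the three-term relation $a_k=(a_{k-1}\wedge a_k)\vee(a_k\wedge a_{k+1})$, and for $n\geq 3$ you apply the inductive hypothesis to the $(n-1)$-homomorphism $T_{x\wedge y}(x_1,\dots,x_{n-1}):=T(x_1,\dots,x_{n-1},x\wedge y)$, whose diagonal being a lattice homomorphism yields the global collapse $\bigvee_{k=0}^{n-1}(a_k\wedge a_{k+1})=(a_0\wedge a_1)\vee(a_{n-1}\wedge a_n)$; combined with the three-term relation this forces $a_k\le a_0\vee a_n$, and the dual argument with $T_{x\vee y}$ gives the lower bound. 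All the algebra you sketch (including the hedged ``should simplify'') checks out: expanding $T\bigl((x\vee y)^{n-1},x\wedge y\bigr)$ by homomorphy and symmetry does produce $\bigvee_{i=0}^{n-1}(a_i\wedge a_{i+1})$, and $T(x^{n-1},x\wedge y)\vee T(y^{n-1},x\wedge y)$ does equal $(a_{n-1}\wedge a_n)\vee(a_0\wedge a_1)$. Your approach dispenses with Lemma~\ref{L:alg} entirely, is self-dual, and makes transparent exactly why the local three-term inequality alone is insufficient (your $(0,1,1,1,0)$ counterexample is a nice touch); the paper's argument is more bare-hands but avoids the need to verify that restriction along a coordinate preserves symmetry and the multi-homomorphism property (which is, in any case, routine). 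Both are valid; yours is arguably the cleaner and more conceptual of the two.
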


\begin{proof}
Suppose $T$ is symmetric. We know from Corollary~\ref{C:wCSI} that $P_T$ is a lattice homomorphism if and only if for all $x,y\in L$ and each $k\in[n-1]$, we have
\[
P_T(x)\wedge P_T(y)\leq T(x^k, y^{n-k})\leq P_T(x) \vee P_T(y).
\]
We will show that
\begin{equation}\tag{$\ast$}\label{eq: ast}
T(x^k, y^{n-k})\leq P_T(x) \vee P_T(y)\qquad \Bigl(x,y\in L,\ k\in[n-1]\Bigr)
\end{equation}
holds, noting that the proof of
\[
P_T(x)\wedge P_T(y)\leq T(x^k, y^{n-k})\qquad \Bigl(x,y\in L,\ k\in[n-1]\Bigr)
\]
is nearly identical.

To this end, let $x,y\in L$ and $k\in[n-1]$. Without loss of generality, suppose $k\leq n-k$.

As a symmetric lattice $n$-homomorphism, $T$ is total orderization invariant by Theorem~\ref{T:charoftoimultihoms}, and thus
\[
T(x_1,...,x_i,....,x_j,...,x_n)=T(x_1,...,x_i\wedge x_j,....,x_i\vee x_j,...,x_n)
\]
holds for each $x_1,...,x_n\in L$ and all $i,j\in\{1,...,n\}$ by Theorem~\ref{T:charsoftoi}. Using the previous identity in the first equality below, the assumption that $T$ is a symmetric lattice multi-homomorphism in the second identity below, and then repeatedly employing Lemma~\ref{L:alg}, we get
\begin{align*}
T(x^k, y^{n-k})&=T(x^{k-1}, y^{n-k-1}, x\vee y, x\wedge y)\\
&=\bigvee_{i=k-1}^k T(x^{i},  y^{n-i-1}, x\wedge y)\\
&=\bigvee_{i=k-2}^k T\Bigl(x^{i},  y^{n-i-2}, (x\wedge y)^2\Bigr)\\
&=\bigvee_{i=k-3}^k T\Bigl(x^{i},  y^{n-i-3}, (x\wedge y)^3\Bigr)\\
&\hspace{.195cm}\vdots\\
&=\bigvee_{i=0}^k T\Bigl(x^{i},  y^{n-i-k}, (x\wedge y)^{k}\Bigr).
\end{align*}

At this point, we see that the first term in the supremum does not have $x$ as an argument:

\[
\bigvee_{i=0}^k T\Bigl(x^{i},  y^{n-i-k}, (x\wedge y)^{k}\Bigr)=T\Bigl(y^{n-k}, (x\wedge y)^k\Bigr) \vee \bigvee_{i=1}^k T\Bigl(x^{i},  y^{n-i-k}, (x\wedge y)^{k}\Bigr).
\]

Using Lemma~\ref{L:alg} over and over again, we obtain
\begin{align*}
T&\Bigl(y^{n-k}, (x\wedge y)^k\Bigr) \vee \bigvee_{i=1}^k T\Bigl(x^{i},  y^{n-i-k}, (x\wedge y)^{k}\Bigr)\\
\\
&=T\Bigl(y^{n-k}, (x\wedge y)^k\Bigr) \vee \bigvee_{i=0}^k T\Bigl(x^{i},  y^{n-i-(k+1)}, (x\wedge y)^{k+1}\Bigr)\\
\\
&=T\Bigl(y^{n-k}, (x\wedge y)^k\Bigr) \vee T\Bigl(y^{n-(k+1)}, (x\wedge y)^{k+1}\Bigr) \vee \bigvee_{i=1}^k T\Bigl(x^{i},  y^{n-i-(k+1)}, (x\wedge y)^{k+1}\Bigr)\\
&\hspace{.195cm}\vdots\\
&=T\Bigl(y^{n-k}, (x\wedge y)^k\Bigr) \vee \cdots \vee  T\Bigl(y^{n-(k+n-2k-1)}, (x\wedge y)^{k+n-2k-1}\Bigr)\\
&\qquad\qquad \vee \bigvee_{i=0}^k T\Bigl(x^{i},  y^{n-i-(k+n-2k)}, (x\wedge y)^{k+n-2k}\Bigr)\\
\\
&=T\Bigl(y^{n-k}, (x\wedge y)^k\Bigr) \vee \cdots \vee  T\Bigl(y^{k+1}, (x\wedge y)^{n-(k+1)}\Bigr) \vee \bigvee_{i=0}^k T\Bigl(x^{i},  y^{k-i}, (x\wedge y)^{n-k}\Bigr).
\end{align*}

If $k=1$, we have
\begin{align*}
T&\Bigl(y^{n-k}, (x\wedge y)^k\Bigr) \vee \cdots \vee  T\Bigl(y^{k+1}, (x\wedge y)^{n-(k+1)}\Bigr) \vee \bigvee_{i=0}^k T\Bigl(x^{i},  y^{k-i}, (x\wedge y)^{n-k}\Bigr)\\
\\
&=T\Bigl(y^{n-1}, x\wedge y\Bigr) \vee \cdots \vee  T\Bigl(y^{2}, (x\wedge y)^{n-2}\Bigr)\\
&\qquad\qquad \vee T\Bigl(y, (x\wedge y)^{n-1}\Bigr) \vee T\Bigl(T(x, (x\wedge y)^{n-1})\Bigr)\\
\\
&\leq T(y^n)\vee \cdots \vee T(y^n) \vee T(y^n) \vee T(x^n)\\
\\
&=P_T(x)\vee P_T(y),
\end{align*}
using the isotonicity of $T$. Hence \eqref{eq: ast} holds for $k=1$.

Suppose next that $k\geq 2$. As with the case that $k=1$, we now see that from this point on the first term in the supremum
\[
\bigvee_{i=0}^k T\Bigl(x^{i},  y^{k-i}, (x\wedge y)^{n-k}\Bigr)
\]
does not have $x$ as an argument, and the last term in this supremum does not have $y$ as an argument:

\begin{align*}
T&\Bigl(y^{n-k}, (x\wedge y)^k\Bigr) \vee \cdots \vee  T\Bigl(y^{k+1}, (x\wedge y)^{n-(k+1)}\Bigr) \vee \bigvee_{i=0}^k T\Bigl(x^{i},  y^{k-i}, (x\wedge y)^{n-k}\Bigr)\\
\\
&=T\Bigl(y^{n-k}, (x\wedge y)^k\Bigr) \vee \cdots \vee  T\Bigl(y^{k+1}, (x\wedge y)^{n-(k+1)}\Bigr) \vee T\Bigl(y^k, (x\wedge y)^{n-k}\Bigr)\\
&\qquad\qquad\vee T\Bigl(x^k, (x\wedge y)^{n-k}\Bigr) \vee \bigvee_{i=1}^{k-1} T\Bigl(x^{i},  y^{k-i}, (x\wedge y)^{n-k}\Bigr).
\end{align*}

Employing Lemma~\ref{L:alg} repeatedly and taking out terms with no $x$ and no $y$ for argument, we get
\begin{align*}
T&\Bigl(y^{n-k}, (x\wedge y)^k\Bigr) \vee \cdots \vee  T\Bigl(y^{k+1}, (x\wedge y)^{n-(k+1)}\Bigr) \vee T\Bigl(y^k, (x\wedge y)^{n-k}\Bigr)\\
&\qquad\qquad \vee T\Bigl(x^k, (x\wedge y)^{n-k}\Bigr) \vee \bigvee_{i=1}^{k-1} T\Bigl(x^{i},  y^{k-i}, (x\wedge y)^{n-k}\Bigr)\\
\\
&=T\Bigl(y^{n-k}, (x\wedge y)^k\Bigr) \vee \cdots \vee  T\Bigl(y^{k+1}, (x\wedge y)^{n-(k+1)}\Bigr) \vee T\Bigl(y^k, (x\wedge y)^{n-k}\Bigr)\\
&\qquad\qquad \vee T\Bigl(x^k, (x\wedge y)^{n-k}\Bigr)  \vee \bigvee_{i=0}^{k-1} T\Bigl(x^{i},  y^{k-1-i}, (x\wedge y)^{n-(k-1)}\Bigr)\\
\\
&=T\Bigl(y^{n-k}, (x\wedge y)^k\Bigr) \vee \cdots \vee  T\Bigl(y^{k+1}, (x\wedge y)^{n-(k+1)}\Bigr) \vee T\Bigl(y^k, (x\wedge y)^{n-k}\Bigr)\\
&\qquad\qquad \vee T\Bigl(y^{k-1}, (x\wedge y)^{n-(k-1)}\Bigr) \vee T\Bigl(x^k, (x\wedge y)^{n-k}\Bigr) \vee T\Bigl(x^{k-1}, (x\wedge y)^{n-(k-1)}\Bigr)\\
&\qquad\qquad \vee \bigvee_{i=1}^{k-2} T\Bigl(x^{i},  y^{k-1-i}, (x\wedge y)^{n-(k-1)}\Bigr)\\
&\hspace{.195cm}\vdots\\
&=T\Bigl(y^{n-k}, (x\wedge y)^k\Bigr) \vee \cdots \vee   T\Bigl(y^{2}, (x\wedge y)^{n-2}\Bigr) \vee T\Bigl(x^k, (x\wedge y)^{n-k}\Bigr)\\
&\qquad\qquad \vee \cdots \vee T\Bigl(x^{2}, (x\wedge y)^{n-2}\Bigr) \vee \bigvee_{i=1}^{1} T\Bigl(x^{i},  y^{2-i}, (x\wedge y)^{n-2}\Bigr)\\
\\
&=T\Bigl(y^{n-k}, (x\wedge y)^k\Bigr) \vee \cdots \vee   T\Bigl(y^{2}, (x\wedge y)^{n-2}\Bigr) \vee T\Bigl(x^k, (x\wedge y)^{n-k}\Bigr)\\
&\qquad\qquad \vee \cdots \vee T\Bigl(x^{2}, (x\wedge y)^{n-2}\Bigr) \vee T\Bigl(x,  y, (x\wedge y)^{n-2}\Bigr).
\end{align*}

Next we obtain

\begin{align*}
T&\Bigl(y^{n-k}, (x\wedge y)^k\Bigr) \vee \cdots \vee   T\Bigl(y^{2}, (x\wedge y)^{n-2}\Bigr) \vee T\Bigl(x^k, (x\wedge y)^{n-k}\Bigr)\\
&\qquad\qquad \vee \cdots \vee T\Bigl(x^{2}, (x\wedge y)^{n-2}\Bigr) \vee T\Bigl(x,  y, (x\wedge y)^{n-2}\Bigr)\\
\\
&=T\Bigl(y^{n-k}, (x\wedge y)^k\Bigr) \vee \cdots \vee   T\Bigl(y^{2}, (x\wedge y)^{n-2}\Bigr) \vee T\Bigl(x^k, (x\wedge y)^{n-k}\Bigr)\\
&\qquad\qquad \vee \cdots \vee T\Bigl(x^{2}, (x\wedge y)^{n-2}\Bigr) \vee T\Bigl(x \vee y, (x\wedge y)^{n-1}\Bigr)\\
\\
&=T\Bigl(y^{n-k}, (x\wedge y)^k\Bigr) \vee \cdots \vee   T\Bigl(y^{2}, (x\wedge y)^{n-2}\Bigr) \vee T\Bigl(x^k, (x\wedge y)^{n-k}\Bigr)\\
&\qquad\qquad \vee \cdots \vee T\Bigl(x^{2}, (x\wedge y)^{n-2}\Bigr) \vee T\Bigl(x, (x\wedge y)^{n-1}\Bigr) \vee T\Bigl(y, (x\wedge y)^{n-1}\Bigr).
\end{align*}

Finally, using the isotonicity of $T$, we have
\begin{align*}
T&\Bigl(y^{n-k}, (x\wedge y)^k\Bigr) \vee \cdots \vee   T\Bigl(y^{2}, (x\wedge y)^{n-2}\Bigr) \vee T\Bigl(x^k, (x\wedge y)^{n-k}\Bigr) \vee \cdots \vee T\Bigl(x^{2}, (x\wedge y)^{n-2}\Bigr)\\
&\qquad\qquad \vee T\Bigl(x, (x\wedge y)^{n-1}\Bigr) \vee T\Bigl(y, (x\wedge y)^{n-1}\Bigr)\\
\\
&\leq T(y^n) \vee \cdots \vee T(y^n) \vee T(x^n) \vee \cdots \vee T(x^n) \vee T(x^n) \vee T(y^n)\\
\\
&=P_T(x)\vee P_T(y).
\end{align*}

We thus obtain \eqref{eq: ast}, and the proof is complete.
\end{proof}

As a final note, we in particular have the following result for homogeneous polynomials on vector lattices.

\begin{corollary}
Let $E$ and $F$ be vector lattices, put $n\in\mathbb{N}\setminus\{1\}$, and let $P_T\colon E\to F$ be an $n$-homogeneous polynomial generated by a  symmetric $n$-linear map $T\colon E^n\to F$. If $T$ is a vector lattice $n$-morphism, then $P_T$ is a lattice homomorphism.
\end{corollary}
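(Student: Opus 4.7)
The plan is to deduce this corollary as an immediate specialization of Theorem~\ref{T:diags}. The key observation is that every vector lattice is a distributive lattice (indeed, Archimedean or not, vector lattices satisfy the distributive laws for $\vee$ and $\wedge$), so the hypotheses of Theorem~\ref{T:diags} are available once we recast the given $T$ as a lattice $n$-homomorphism.

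First I would recall the definition of a vector lattice $n$-morphism: it is an $n$-linear map $T\colon E^n\to F$ which is a lattice homomorphism in each coordinate, i.e.\ for every $i\in\{1,\dots,n\}$, every choice of $x_1,\dots,x_n,y\in E$, the identities
\[
T(x_1,\dots,x_i\vee y,\dots,x_n)=T(x_1,\dots,x_n)\vee T(x_1,\dots,y,\dots,x_n)
\]
and
\[
T(x_1,\dots,x_i\wedge y,\dots,x_n)=T(x_1,\dots,x_n)\wedge T(x_1,\dots,y,\dots,x_n)
\]
hold. This is precisely the definition of a lattice $n$-homomorphism between the distributive lattices $E$ and $F$ given in Section~\ref{S:Prelims}. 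Since $T$ is also assumed symmetric, $T$ fits exactly the hypothesis of Theorem~\ref{T:diags}.

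Applying Theorem~\ref{T:diags} directly, we conclude that the diagonal map $P_T\colon E\to F$, $P_T(x)=T(x,\dots,x)$, is a lattice homomorphism, which is the desired conclusion. There is no genuine obstacle here; the corollary is a straightforward translation of the distributive-lattice result into the vector lattice language, and its only content beyond Theorem~\ref{T:diags} is the remark that vector lattice $n$-morphisms are lattice $n$-homomorphisms in the sense of Section~\ref{S:Prelims}. Accordingly, the proof proposal is essentially one line: observe the definitional compatibility and invoke Theorem~\ref{T:diags}.
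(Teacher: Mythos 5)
Your proposal is correct and is exactly the intended argument: the paper states this corollary without proof precisely because it is the direct specialization of Theorem~\ref{T:diags} to vector lattices, which are distributive lattices, with vector lattice $n$-morphisms being lattice $n$-homomorphisms in the sense of Section~\ref{S:Prelims}. Nothing further is needed.
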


\end{document}